\documentclass[10pt]{amsart}
\usepackage{graphicx}
\usepackage{amsmath, amsthm,  amsfonts, amscd, parskip} %parskip removes automatic indent
\usepackage{url}
\usepackage{tikz-cd}
\usepackage[letterpaper, margin=3cm]{geometry}
\usepackage{slashed} % For Dirac operator
\usepackage{amssymb} % For surjective arrow
\allowdisplaybreaks % aligned equations can break over pages

\DeclareMathOperator{\id}{id}

\theoremstyle{plain}
\newtheorem{theorem}{Theorem}[section]
\newtheorem{lemma}[theorem]{Lemma}
\newtheorem{corollary}[theorem]{Corollary}
\newtheorem{proposition}[theorem]{Proposition}
\newtheorem{question}[theorem]{Question}

\theoremstyle{definition}
\newtheorem{definition}[theorem]{Definition}
\newtheorem{example}[theorem]{Example}
\newtheorem{remark}[theorem]{Remark}

\numberwithin{equation}{section}
\numberwithin{figure}{section}

\newcommand{\CC}{{\mathbb C}}
\newcommand{\RR}{{\mathbb R}}
\newcommand{\ZZ}{{\mathbb Z}}
\newcommand{\QQ}{{\mathbb Q}}

\newcommand{\CP}{{\mathbb {CP}}}
\newcommand{\HP}{{\mathbb{HP}}}

\DeclareMathOperator{\ch}{{\mathrm{ch}}}

\title{Spin$^h$ and further generalisations of spin} 
\author{Michael Albanese, Aleksandar Milivojevi\'c}

\vspace{1em}
\address{University of Waterloo}
\email{m3albane@uwaterloo.ca}
\address{Max Planck Institute for Mathematics} 
\email{milivojevic@mpim-bonn.mpg.de}

\begin{document}

\begin{abstract}
The question of which manifolds are spin or spin$^c$ has a simple and complete answer. In this paper we address the same question for spin$^h$ manifolds, which are less studied but have appeared in geometry and physics in recent decades. We determine that the first obstruction to being spin$^h$ is the fifth integral Stiefel--Whitney class $W_5$. Moreover, we show that every compact orientable manifold of dimension 7 or lower is spin$^h$, and that there are orientable manifolds which are not spin$^h$ in all higher dimensions. We are then led to consider an infinite sequence of generalised spin structures. In doing so, we show that there is no integer $k$ such that every manifold embeds in a spin manifold with codimension $k$.
\end{abstract}\

\maketitle

\section{Introduction}

Every smooth manifold embeds in an orientable manifold with codimension one. Namely, a manifold $M$ embeds in the total space of its determinant line bundle $L$, which is orientable as $w_1(L) =w_1(M)$.

Analogously, we could consider the following:

\begin{question}\label{Q1}
Is there a positive integer $k$ such that every manifold embeds in a spin manifold with codimension $k$?
\end{question}

By the Whitney embedding theorem, every $n$--dimensional manifold embeds in $\mathbb{R}^{2n}$ which is spin, but this does not provide an answer to the question as the codimension depends on $n$. As above, we look for a vector bundle $E \to M$ whose total space is spin. Upon passing to a manifold of one larger dimension if necessary, we may assume $M$ is orientable, and so we aim to find an orientable vector bundle $E$ with $w_2(E) = w_2(M)$. So we could instead answer the following:

\begin{question}\label{Q2} 
Is there a positive integer $k$ such that every orientable manifold $M$ admits an orientable (real) rank $k$ vector bundle $E$ with $w_2(E) = w_2(M)$?
\end{question}

In fact, this question is equivalent to Question \ref{Q1}: given an embedding in a spin manifold with codimension $k$, then one can take $E$ to be the normal bundle. Unlike in the orientable analogue, there is no canonical choice of bundle $E$ with $w_2(E) = w_2(M)$. 

Note that if $k = 2$ were an answer to Question \ref{Q2}, then we would deduce that every orientable manifold is spin$^c$. This is not the case as the 5--dimensional Wu manifold $SU(3)/SO(3)$ illustrates. However, this manifold does admit what is known as a spin$^h$ structure. An orientable $n$-manifold admits a spin$^h$ structure if the structure group of its tangent bundle lifts from $SO(n)$ to $Spin^h(n) = Spin(n) \times_{\ZZ_2} Sp(1)$; see section 2. If every orientable manifold were to admit such a structure, then $k = 3$ would be an answer to Question \ref{Q2}.

In section 2, we study the existence of spin$^h$ structures and in doing so, prove the following (Corollary \ref{spinhW5}):

\begin{theorem}
The primary obstruction to admitting a spin$^h$ structure is the fifth integral Stiefel--Whitney class $W_5$.
\end{theorem}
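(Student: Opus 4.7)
The plan is to apply obstruction theory to the fibration arising from the defining extension of $Spin^h(n)$, and to identify the universal primary obstruction class in $H^5(BSO(n); \ZZ)$ as $W_5$.

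The short exact sequence $1 \to Sp(1) \to Spin^h(n) \to SO(n) \to 1$ (coming from $Spin^h(n) = Spin(n) \times_{\ZZ_2} Sp(1)$) yields a fibration $BSp(1) \to BSpin^h(n) \to BSO(n)$, and a spin$^h$ structure on an oriented rank $n$ bundle $E \to M$ classified by $f : M \to BSO(n)$ is a lift of $f$ through this fibration. Since $BSp(1) = \HP^\infty$ is $3$-connected with $\pi_4 = \ZZ$, standard obstruction theory gives the primary obstruction in $H^5(M; \ZZ)$, equal to the pullback $f^*o$ of a universal class $o \in H^5(BSO(n); \ZZ)$. For $n \geq 6$, $H^5(BSO(n); \ZZ) \cong \ZZ_2$ is generated by $W_5$, so $o \in \{0, W_5\}$.

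To conclude $o = W_5$, I would first establish necessity: if $E$ is spin$^h$ then $W_5(E) = 0$. The double cover $Spin^h(n) \to SO(n) \times SO(3)$ gives the equivalent reformulation that spin$^h$ structures on $E$ correspond to the data of an oriented rank $3$ bundle $F \to M$ with $w_2(F) = w_2(E)$ together with a spin structure on $E \oplus F$. Since $W_5$ vanishes on spin bundles (as $w_4$ is then the mod $2$ reduction of the integral class $p_1/2$), we have $0 = W_5(E \oplus F)$. Expanding with the Whitney sum formula using $w_1(E) = w_1(F) = 0$ and $w_4(F) = 0$ yields $w_4(E \oplus F) = w_4(E) + w_2(E)^2$, and applying the integral Bockstein $\beta$ gives
\[ 0 = W_5(E \oplus F) = W_5(E) + \beta(w_2(E)^2). \]
The universal class $\beta(w_2^2) \in H^5(BSO(n); \ZZ)$ is zero, because its mod-$2$ reduction equals $Sq^1(w_2^2) = 2 w_2 \cdot Sq^1 w_2 = 0$ while $H^5(BSO(n); \ZZ)$ has no $4$-torsion. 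Thus $W_5(E) = 0$, and in particular the universal $W_5$ pulls back trivially to $BSpin^h(n)$.

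Finally, in the Serre spectral sequence of the fibration $BSp(1) \to BSpin^h(n) \to BSO(n)$, the class $W_5 \in E_2^{5,0}$ can only be killed by the transgression $d_5 : E_5^{0,4} = H^4(\HP^\infty; \ZZ) \cong \ZZ \to E_5^{5,0}$. The vanishing of $W_5$ on $BSpin^h(n)$ forces $d_5$ to be nontrivial, sending a generator to $W_5$, which identifies the first $k$-invariant as $o = W_5$. The main technical obstacle I anticipate is the universal vanishing of $\beta(w_2^2)$, which combines the mod-$2$ Cartan calculation with the absence of $4$-torsion in $H^*(BSO(n); \ZZ)$; a secondary point is the small-$n$ case (in particular $n = 5$, where the Euler class contributes a free summand to $H^5$), which is unaffected since $o$ is torsion.
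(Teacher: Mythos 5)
Your proof is correct but takes a genuinely different route from the paper's for the key step, which is showing $W_5 = 0$ on spin$^h$ bundles. The paper invokes Thomas's theorem on the Pontryagin square, $\mathfrak{P}(w_2) = \rho_4(p_1) + i_*(w_4)$, together with a $\bmod\ 2$/$\bmod\ 4$ Bockstein diagram chase and a Serre spectral sequence computation of $H^4(BSpin^h(n);\ZZ)$; this is more work, but produces the explicit integral lift $\tfrac{1}{2}\left(p_1(P) - p_1(E)\right)$ of $w_4(P)$, which the paper then uses elsewhere (e.g.\ in the dimension 8 example). You instead observe that $E \oplus F$ is spin (where $F$ is the canonical $SO(3)$-bundle), expand $W_5(E\oplus F) = W_5(E) + \beta(w_2(E)^2)$ by the Whitney sum formula, and kill the second term by noting that the universal class $\beta(w_2^2) \in H^5(BSO(n);\ZZ)$ has vanishing mod-$2$ reduction $Sq^1(w_2^2) = 2 w_2 Sq^1 w_2 = 0$ by the Cartan formula, while by Brown's theorem $H^*(BSO(n);\ZZ)$ has no $4$-torsion; this is shorter and more elementary, but does not yield the integral lift of $w_4$. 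The final identification of the primary obstruction as $W_5$ via the transgression $d_5$ in the Serre spectral sequence of $\HP^\infty \to BSpin^h(n) \to BSO(n)$ is essentially the same as the paper's Moore--Postnikov argument in its Corollary 2.6, just phrased in spectral sequence language. Your handling of the low-$n$ edge cases is also fine: Brown's torsion theorem applies uniformly, and naturality from the stable range pins down the transgression for small $n$ such as $n=5$ where the Euler class contributes a free summand to $H^5$.
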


Moreover, we obtain an explicit integral lift of $w_4$ in terms of the spin$^h$ structure. In section 4 (Remark \ref{obstruction}), we observe that $W_5$ is not the only obstruction to a spin$^h$ structure.

The above obstruction vanishes on any orientable 5--manifold, hence any such manifold admits a spin$^h$ structure. We improve on this by showing (Theorem \ref{upto7}):

\begin{theorem}
The following hold: \begin{enumerate} \item Every (not necessarily compact) orientable manifold of dimension $\leq 5$ is spin$^h$. 
\item Compact orientable manifolds of dimension $6$ and $7$ are spin$^h$. \item A non-compact orientable manifold $M$ of dimension $6$ or $7$ is spin$^h$ if and only if $W_5(TM) = 0$. \item A non-compact orientable manifold $M$ of dimension $6$ or $7$ with no elements of order exactly four in $H^5(M;\ZZ)$ is spin$^h$. \end{enumerate}
\end{theorem}

In particular, closed orientable manifolds of dimension $\leq 7$ are spin${}^h$. In section 4 (Theorem \ref{nonspinhall}), we see that this is sharp:

\begin{theorem}
In every dimension $\geq 8$, there are infinitely many homotopy types of closed smooth simply connected manifolds which do not admit spin$^h$ structures.
\end{theorem}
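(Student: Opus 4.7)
The plan is to reduce the problem to constructing a single closed simply connected smooth $8$-manifold $M_0$ with $W_5(M_0) \neq 0$; by the primary obstruction theorem stated above, such $M_0$ is automatically not spin$^h$. From $M_0$ we will produce infinitely many $n$-dimensional examples for each $n \geq 8$ via product and connected-sum operations. The main obstacle is producing $M_0$ itself.

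Since $W_5 = \beta(w_4)$ takes values in the $2$-torsion of $H^5(\,\cdot\,;\mathbb{Z})$, we need $M_0$ to carry $2$-torsion in $H^5(M_0;\mathbb{Z})$ (equivalently, by Poincar\'e duality and universal coefficients, in $H_3(M_0;\mathbb{Z})$), and the tangent bundle's $w_4$ must be a nonliftable mod-$2$ class whose Bockstein is this torsion element. I would construct $M_0$ by prescribing the required cohomological and bundle-theoretic data on a simply connected CW model (for instance, a thickening of the Moore space $S^4 \cup_2 e^5$ equipped with a classifying map to $BSO$ pulling back $w_4$ to the generator of $H^4(\,\cdot\,;\mathbb{Z}/2)$), and then realising this data as a closed smooth simply connected $8$-manifold by embedding the regular neighbourhood in a high-dimensional Euclidean space, doubling, and performing surgeries to kill unwanted homotopy. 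I expect this constructive step to be the bulk of the work.

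With $M_0$ in hand, for each $n \geq 8$ set $M_0^{(n)} := M_0 \times S^{n-8}$, where $S^0$ is interpreted as a point so that $M_0^{(8)} = M_0$. Because spheres are stably parallelisable, the product formula gives $w_4(M_0^{(n)}) = w_4(M_0) \times 1$, and naturality of the Bockstein combined with K\"unneth yields $W_5(M_0^{(n)}) = W_5(M_0) \times 1 \neq 0$. The factor $M_0^{(n)}$ is simply connected as a product of simply connected spaces. Now for each $k \in \mathbb{Z}_{\geq 0}$ define
\[ N_k^{(n)} := M_0^{(n)} \mathbin{\#} k\,(S^2 \times S^{n-2}), \]
a closed simply connected smooth $n$-manifold. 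For $n \geq 8$ one has $5 < n-1$, so Mayer--Vietoris gives a splitting $H^5(N_k^{(n)};\mathbb{Z}) \cong H^5(M_0^{(n)};\mathbb{Z}) \oplus H^5(S^2 \times S^{n-2};\mathbb{Z})^{\oplus k}$ compatible with tangent bundle restrictions; hence $W_5(N_k^{(n)}) = W_5(M_0^{(n)}) \neq 0$ (the $S^2 \times S^{n-2}$ summands being spin, contributing $0$), so every $N_k^{(n)}$ fails to be spin$^h$. Finally, the second Betti numbers $b_2(N_k^{(n)}) = b_2(M_0^{(n)}) + k$ take infinitely many distinct values as $k$ varies, so the family $\{N_k^{(n)}\}_{k \geq 0}$ realises infinitely many pairwise non-homotopy-equivalent examples in dimension $n$.
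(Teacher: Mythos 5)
Your proposal hinges on producing a closed simply connected smooth $8$-manifold $M_0$ with $W_5(M_0)\neq 0$. This base case does not exist: by a result of Hirzebruch and Hopf (cited in the paper as \cite[p.170]{HH58}), \emph{every} closed orientable $8$-manifold satisfies $W_5=0$. The paper makes this point explicitly in Remark \ref{obstruction}, precisely to explain why its own $8$-dimensional examples of non-spin$^h$ manifolds demonstrate that the primary obstruction $W_5$ is \emph{not} the only obstruction to a spin$^h$ structure. The smallest dimension in which an orientable manifold can have $W_5 \neq 0$ is $10$ (the paper's Example \ref{WuxWu}, $W\times W$ for $W$ the Wu manifold), so an obstruction-theoretic argument along the lines you sketch could at best reach dimensions $\geq 10$, missing $8$ and $9$ entirely. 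The constructive step you describe (thickening a Moore space, doubling, surgering) would therefore be working toward an impossible target in dimension $8$, and no amount of care with the surgery can fix that.

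The paper instead works with a secondary, index-theoretic obstruction. Using Mayer's integrality theorem for manifolds immersed in spin manifolds with small codimension, together with the explicit integral lift $\tfrac{1}{2}(p_1(TX)-p_1(E))$ of $w_4$ coming from a putative spin$^h$ structure (Proposition \ref{integralliftw4}), one derives a congruence $c^2 \equiv y - 6 \bmod 48$ that any spin$^h$ structure on a suitable rationally highly connected $8$-manifold would have to satisfy, where $y=\int_X p_2(TX)$. The realisation theorem of Sullivan/Kennard--Su is then used to construct closed simply connected smooth $8$-manifolds with Pontryagin numbers chosen so that $y-6\equiv 21 \bmod 48$, which is not a quadratic residue mod $48$; these manifolds are therefore not spin$^h$. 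Varying the Pontryagin numbers gives infinitely many homotopy types. Your final steps (taking products with spheres to raise dimension, and using connected sums or distinct characteristic numbers to get infinitely many homotopy types) are sound and parallel what the paper does via Proposition \ref{product}(2); it is only the starting point that fails.
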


In section 3, we consider the groups $Spin^k(n) := (Spin(n)\times Spin(k))/\mathbb{Z}_2$ and their associated structures which generalise spin, spin$^c$, and spin$^h$ structures. The existence of such a structure on a smooth manifold $M$ is equivalent to the existence of an orientable rank $k$ vector bundle $E$ with $w_2(E) = w_2(M)$, see Proposition \ref{equivalences}. By using techniques from rational surgery theory, we prove the following (Theorem \ref{4}), which provides a negative answer to Questions \ref{Q1} and \ref{Q2}:

\begin{theorem}
For every positive integer $k$, there exists a closed smooth simply connected manifold which does not admit a spin$^k$ structure.
\end{theorem}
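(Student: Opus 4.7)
By Proposition~\ref{equivalences}, to show $M$ is not spin$^k$ it suffices to show that no orientable real rank-$k$ bundle $E \to M$ satisfies $w_2(E) = w_2(M)$. If such $E$ existed, then $F := TM \oplus E$ would be a spin bundle of rank $n+k$ (with $n = \dim M$), and in $H^*(M;\QQ)$ the Whitney sum formula would yield $p(F) = p(TM) \cdot p(E)$; the rank bound forces $p_i(E) = 0$ for $i > k/2$, so that $p(E) = 1 + p_1(E) + \cdots + p_{\lfloor k/2 \rfloor}(E)$ is a polynomial truncated at degree $4 \lfloor k/2 \rfloor$. Equivalently, the rational class $p(F)\, p(TM)^{-1}$ must be supported in degrees at most $4 \lfloor k/2 \rfloor$. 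This is the algebraic condition we aim to violate by choosing $M$ appropriately.

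For each $k$, the plan is to apply Sullivan's rational surgery realization theorem (in the form given by Barge): any simply connected rational Poincar\'e duality algebra $A^{\bullet}$ of formal dimension $n \geq 5$, equipped with rational Pontryagin classes satisfying Hirzebruch's signature formula and the integrality of Pontryagin numbers, is realized by a closed smooth simply connected $n$-manifold with cohomology ring $A^{\bullet}$ and those Pontryagin classes. The task is to design $(A^{\bullet}, \{q_i\})$ for each $k$ so that the truncation condition above cannot be satisfied for any integral rank-$k$ orientable bundle $E$ with $w_2(E) = w_2(M)$. A natural strategy is to arrange $A^{\bullet}$ so that $H^{4j}$ is highly restrictive in the range $1 \leq j \leq \lfloor k/2 \rfloor$, while making the prescribed Pontryagin classes $q_i$ nondegenerate in higher degrees, so that the free rational parameters $p_1(E), \ldots, p_{\lfloor k/2 \rfloor}(E)$ cannot simultaneously satisfy the Whitney relations forced by $p(F) = p(TM)\,p(E)$ together with the matching $w_2(E) = w_2(M)$.

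The principal obstacle is this combinatorial design step: for each $k$ one must exhibit a concrete finite rational Poincar\'e duality algebra together with Pontryagin classes that encode the spin$^k$ obstruction, satisfy the signature and integrality conditions required for rational-surgery realization, and admit an integral refinement whose $w_2$ is nonzero and witnesses the obstruction. Once such data is in hand, Sullivan--Barge realization delivers the manifold $M$ and its failure to admit a spin$^k$ structure follows directly from the algebraic design, proving the theorem.
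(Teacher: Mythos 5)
Your plan to use Sullivan-style rational surgery realization matches the paper's strategy at a structural level, but the ``algebraic condition'' you identify as the thing to violate does not actually exist, and this is a fatal gap. You write that for a candidate rank-$k$ bundle $E$ with $w_2(E) = w_2(M)$, the class $p(F)\,p(TM)^{-1}$ (where $F = TM \oplus E$) must be supported in degrees $\leq 4\lfloor k/2 \rfloor$. But $p(F)\,p(TM)^{-1} = p(E)$ identically, and since $E$ is an auxiliary bundle not determined by $M$, there is no constraint being imposed: one may choose any polynomial $1 + p_1 + \cdots + p_{\lfloor k/2 \rfloor}$ as $p(E)$ and then \emph{define} $p(F)$ to be $p(TM)\,p(E)$. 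The spin condition on $F$ constrains $w_2(F)$ modulo $2$ but places no rational constraint on $p(F)$. In other words, over $\QQ$ the ``truncation'' condition is a tautology, and no choice of rational Poincar\'e duality algebra and rational Pontryagin classes can violate it. This is exactly why the primary obstruction $W_5$ and the further obstructions live in torsion or mod-$2$ cohomology: a spin$^k$ structure is never obstructed rationally.

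The idea that is missing is that the genuine obstruction comes from \emph{integrality}, via index theory. The paper invokes Mayer's theorem: if $X$ admits a codimension-$k$ immersion in a spin manifold with normal bundle $\nu$, then $\int_X 2^l\,\mathcal{M}(\nu)\,\hat{A}(X) \in \ZZ$ (with $l = \lfloor k/2\rfloor$), together with twisted versions by $KO$-theoretic Pontryagin classes (Corollary \ref{fullintegrality}). On a rationally highly connected $8m$-manifold the Mayer class is torsion, so these integrality constraints translate into a $2$-adic lower bound on the signature (Lemma \ref{signature}, following Kennard--Su). The Sullivan/Kennard--Su realization theorem is then used to produce rationally highly connected $8m$-manifolds with \emph{odd} signature, which for $m$ large enough relative to $k$ violates the bound. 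So the realization step you propose is on the right track, but what must be engineered is a divisibility violation of an $\hat{A}$-type characteristic number, not a rational Whitney sum relation; without bringing in an integrality theorem of Atiyah--Singer/Mayer type, the argument cannot get off the ground.

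Incidentally, even if some rational obstruction had existed, the simply connected assumption together with Proposition \ref{equivalences} would still leave you with the hard step of passing from the nonexistence of a \emph{rational} $p(E)$ to the nonexistence of an actual orientable rank-$k$ bundle with the given $w_2$; your sketch elides this. The paper avoids the issue entirely by working with characteristic numbers that are well defined for the manifold itself and appealing to the integrality statement, which holds for every genuine bundle.
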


In section 5, we conclude with a brief discussion of non-orientable analogues of spin$^k$ structures.

\subsection*{Acknowledgements} The authors were introduced to spin$^h$ structures by Xuan Chen \cite{Chen}, who raised the question of whether every orientable 5--manifold is spin$^h$, and whether there exist orientable manifolds which are not spin$^h$. The same questions arose in a conversation between Edward Witten and the first author, as well as questions about non-orientable analogues of the spin$^h$ condition.

The authors would like to thank Blaine Lawson for some interesting discussions during the early development of these ideas, Claude LeBrun for his help with Example \ref{AQ}, and  Gustavo Granja and Peter Teichner for helpful conversations. We would like to thank the Instituto Superior T\'ecnico Lisboa, the Max-Planck-Institut f\"ur Mathematik Bonn, and the University of Waterloo for their generous hospitality during our respective visits.

\section{Spin${}^h$ structures: existence and non-existence results}

We assume the reader is familiar with spin and spin$^c$ structures; see e.g. \cite{LM} or \cite{Friedrich} for a detailed discussion. Throughout, we will not distinguish between principal $SO(k)$-bundles and their naturally associated orientable rank $k$ vector bundles.

In a similar spirit to the groups $Spin^c(n) = (Spin(n) \times U(1))/\ZZ_2$, the groups $Spin^h(n)$ are defined to be $(Spin(n)\times Sp(1))/\mathbb{Z}_2$ where, in both cases, the $\mathbb{Z}_2$ subgroup is generated by $(-1, -1)$; the exponent $h$ is used to highlight the role of the quaternions $\mathbb{H}$. There is a natural group homomorphism $Spin^h(n) \to SO(n)\times SO(3)$ which is a double covering, and by composing with projections, there are induced homomorphisms $Spin^h(n) \to SO(n)$ and $Spin^h(n) \to SO(3)$.

\begin{definition}
A \textit{spin$^h$ structure} on a principal $SO(n)$-bundle $P$ is a principal $SO(3)$-bundle $E$ together with a principal $Spin^h(n)$-bundle $Q$ and a double covering $Q \to P\times E$ which is equivariant with respect to $Spin^h(n) \to SO(n)\times SO(3)$. We call $E$ the \textit{canonical $SO(3)$-bundle} of the spin$^h$ structure. \end{definition}

It follows from the definition that for the canonical $SO(3)$-bundle $E$ we have $w_2(E) = w_2(P)$. Conversely, if there is a principal $SO(3)$-bundle $E$ with $w_2(E) = w_2(P)$, then $P$ admits a spin${}^h$ structure with canonical $SO(3)$-bundle $E$, see e.g. \cite[Proposition 2.3]{Nagase}.

One can also define a spin$^h$ structure on a principal $SO(n)$-bundle $P$ as a reduction of structure group with respect to the homomorphism $p \colon Spin^h(n) \to SO(n)$. The canonical $SO(3)$-bundle of the spin$^h$ structure is then the bundle associated to the homomorphism $Spin^h(n) \to SO(3)$. Note that there are inclusions $Spin(n) \to Spin^c(n) \to Spin^h(n)$ which commute with the projections to $SO(n)$, so a spin or spin$^c$ structure induces a spin$^h$ structure. The canonical $SO(3)$-bundle of the corresponding spin$^h$ structure is trivial in the spin case, and of the form $L\oplus\varepsilon^1$ in the spin$^c$ case, where $L$ is the canonical $SO(2)$-bundle of the spin$^c$ structure and $\varepsilon^1$ denotes the trivial bundle of rank 1. 

A spin$^h$ structure on an oriented Riemannian manifold $(M, g)$ is defined to be a spin$^h$ structure on the bundle of oriented orthonormal frames. As in the spin and spin$^c$ cases, there are natural identifications between spin$^h$ structures for different orientations and metrics, so we do not require these choices when speaking of a spin$^h$ structure on an orientable manifold $M$. We call $M$ spin$^h$ if it admits such a structure.

The notion of spin$^h$ structures appears in \cite{Bar}, but they also appear in \cite{Nagase} under the name spin$^q$. They later featured in the thesis \cite{Chen} and in the physics literature, where instead of $Sp(1)$, the isomorphic Lie group $SU(2)$ is preferred. The relevant groups were named $G_0(n)$ in \cite[Proposition 9.16]{FH}, see also \cite[Appendix D]{SSGR}. In \cite{LS19}, this notion shows up in the more general context of Lipschitz structures. When $n = 4$, what we call $Spin^h(4)$ was denoted by $\overline{Spin}(4)$ in \cite{BFF} and $Spin_{SU(2)}(4)$ in \cite{WWW}; the group was also alluded to in \cite{HP}. The concept of a spin$^h$ structure on a 4--manifold was used to construct a quaternionic version of Seiberg--Witten theory in \cite{OT96}.

\begin{example}\label{AQ}
Recall, a $4n$--dimensional manifold $M$ is called \textit{almost quaternionic} if its structure group can be reduced to $GL(n, \mathbb{H})\cdot\mathbb{H}^* \cong (GL(n, \mathbb{H})\times\mathbb{H}^*)/\mathbb{Z}_2$ where $\mathbb{Z}_2$ acts diagonally by $(-1, -1)$. Choosing a compatible metric, one can always reduce further to the maximal compact subgroup $Sp(n)\cdot Sp(1)$. An equivalent definition is a manifold $M$ together with a rank three subbundle $Q$ of $\operatorname{End}(TM)$ which has a local basis of sections $I, J, K$ satisfying the quaternionic relations $I^2 = J^2 = K^2 = IJK = -\operatorname{id}$. The bundle $Q$ is the associated bundle of the homomorphism $Sp(n)\cdot Sp(1) \to SO(3)$ given by projecting the second factor. The natural spin$^c$ structure on an almost complex manifold has canonical $SO(2)$-bundle the anticanonical bundle\footnote{There are actually two natural spin$^c$ structures associated to an almost complex structure (due to the fact that inversion is an automorphism of $U(1)$), one for which the canonical $SO(2)$-bundle is the anticanonical bundle, and one for which it is the canonical bundle. While the latter would provide more consistency of the term `canonical', the former is more prevalent in the literature.}; does an almost quaternionic manifold admit a spin$^h$ structure for which the canonical $SO(3)$-bundle is $Q$? 

The map $Sp(n)\cdot Sp(1) \to SO(4n)$ lifts to $Spin(4n)$ if $n$ is even; let $n = 2m$. Therefore, every $8m$--dimensional almost quaternionic manifold has a canonical spin structure. The induced spin$^h$ structure has trivial canonical $SO(3)$-bundle, however the bundle $Q$ need not be trivial for an $8m$--dimensional almost quaternionic manifold, as $\mathbb{HP}^{2m}$ demonstrates. One might wonder then whether there is an alternative construction of a spin$^h$ structure which would have the desired canonical $SO(3)$-bundle. This is not possible as there exist $8m$--dimensional almost quaternionic manifolds with $w_2(Q) \neq 0 = w_2(M)$, such as $M = \operatorname{Gr}_2(\mathbb{C}^{2m+2})$; the class $w_2(Q)$ is known as the Marchiafava--Romani class. More generally, a theorem of Salamon \cite[Theorem 6.3]{Sal} states that a quaternion-K\"ahler manifold with positive scalar curvature and zero Marchiafava--Romani class is isometric to quaternionic projective space. In particular, every quaternion-K\"ahler symmetric space other than quaternionic projective space has $w_2(Q) \neq 0$.

On the other hand, if $n$ is odd, say $n = 2m + 1$, we have $w_2(Q) = w_2(M)$ \cite{MR}, so one might hope that such a spin$^h$ structure exists. This is indeed true, see \cite[Theorem 3.3]{Nagase} or \cite[Lemma 2]{Bar}. Note that when $m = 0$, we are in the 4--dimensional case and $Sp(1)\cdot Sp(1) = SO(4)$, so every orientable 4--manifold is almost quaternionic. Moreover, there are two natural spin$^h$ structures (corresponding to the two factors of $Sp(1)$), and the canonical $SO(3)$-bundles are $\Lambda^+$ and $\Lambda^-$, which denote the bundles of self-dual and anti-self-dual two-forms respectively.\end{example}

Every orientable manifold of dimension $\leq 3$ is spin (with $\CP^2$ being an orientable non-spin 4--manifold), and every orientable manifold of dimension $\leq 4$ is spin${}^c$ \cite{TV} (with the Wu manifold $SU(3)/SO(3)$ being an orientable non-spin${}^c$ 5--manifold \cite[p. 50]{Friedrich}). The Wu manifold does however admit a spin$^h$ structure, with canonical $SO(3)$-bundle $SO(3) \to SU(3) \to SU(3)/SO(3)$, see e.g. \cite[Example p.26]{Chen}. As far as the authors are aware, it is unknown in which range of dimensions the orientability of a manifold implies the existence of a spin${}^h$ structure. This naturally leads us to consider the obstructions to admitting a spin${}^h$ structure.

Recall that a principal $SO(n)$-bundle $P$ admits a spin structure if and only if $w_2(P) = 0$, while it admits a spin$^c$ structure if and only if there is a principal $SO(2)$-bundle $E$ with $w_2(E) = w_2(P)$. As $SO(2) = U(1)$, the bundle $E$ has a first Chern class $c_1(E)$ which satisfies $c_1(E) \equiv w_2(E) \bmod 2$, so we see that a necessary condition for $P$ to admit a spin$^c$ structure is that $w_2(P)$ admits an integral lift. This is also sufficient as every degree 2 integral cohomology class is the first Chern class of a complex line bundle. An equivalent reformulation of this condition is that $W_3(P) = 0$ where $W_3(P) = \beta w_2(P)$ is the third integral Stiefel--Whitney class of $P$; here $\beta$ denotes the integral Bockstein map $\beta \colon H^*(-;\ZZ_2) \to H^{*+1}(-;\ZZ)$.

As there are cohomology classes whose vanishing detects whether a principal $SO(n)$-bundle $P$ admits a spin or spin$^c$ structure, namely $w_2(P)$ and $W_3(P)$ respectively, one is naturally led to ask which cohomology classes associated to $P$ must vanish if it admits a spin$^h$ structure.

\begin{proposition}\label{integralliftw4} Let $P$ be a principal $SO(n)$-bundle over a CW complex $X$. If $P$ admits a spin$^h$ structure, then $W_5(P) = 0$. If $E$ is the canonical $SO(3)$-bundle associated to a spin${}^h$ structure, then an integral lift of $w_4(P)$ is given by $\tfrac{1}{2}\left( p_1(P) - p_1(E) \right)$. \end{proposition}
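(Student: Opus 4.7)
The plan is to exhibit the Whitney sum $P \oplus E$ as a spin bundle via a natural group homomorphism $Spin^h(n) \to Spin(n+3)$, and then invoke the standard identity $p_1/2 \equiv w_4 \pmod 2$ on $BSpin$.

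The key group-theoretic observation is that, under $Sp(1) \cong Spin(3)$, the Clifford algebra inclusion $Spin(n) \times Spin(3) \hookrightarrow Spin(n+3)$ sends the kernel element $(-1,-1)$ to the identity, and therefore descends to a homomorphism
\[
Spin^h(n) \to Spin(n+3)
\]
lying over the standard inclusion $SO(n) \times SO(3) \hookrightarrow SO(n+3)$. Consequently, a spin$^h$ structure on $P$ with canonical $SO(3)$-bundle $E$ determines a spin structure on the rank $n+3$ bundle $P \oplus E$.

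I would then invoke the well-known fact that for any spin bundle $V$ there is a distinguished integral class $\tfrac{p_1}{2}(V) \in H^4(X;\ZZ)$ (the pullback of the generator $\tfrac{p_1}{2} \in H^4(BSpin;\ZZ)$) which doubles to $p_1(V)$ and reduces mod 2 to $w_4(V)$. Applied to $V = P \oplus E$, this yields an integral class $c$ with $2c = p_1(P) + p_1(E)$ and $c \equiv w_4(P \oplus E) \pmod 2$. Subtracting the integral class $p_1(E)$ gives an integral class $c - p_1(E)$ with $2(c - p_1(E)) = p_1(P) - p_1(E)$; this is the integral class being denoted by $\tfrac{1}{2}(p_1(P) - p_1(E))$ in the proposition.

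To identify its mod 2 reduction, the Whitney product formula together with $w_2(E) = w_2(P)$ and $w_4(E) = 0$ (because $E$ has rank $3$) gives $w_4(P \oplus E) = w_4(P) + w_2(P)^2$. Combined with the universal congruence $p_1(E) \equiv w_2(E)^2 = w_2(P)^2 \pmod 2$ for real vector bundles, we obtain
\[
c - p_1(E) \equiv w_4(P) + w_2(P)^2 + w_2(P)^2 = w_4(P) \pmod 2,
\]
so $w_4(P)$ admits an integral lift and therefore $W_5(P) = \beta w_4(P) = 0$. The only nontrivial input is the universal identity $\tfrac{p_1}{2} \equiv w_4 \pmod 2$ on $BSpin$, which I would cite from a standard reference such as \cite{LM} rather than reprove; the rest of the argument is an exercise in the Whitney product formula and the relation $p_1 \equiv w_2^2 \pmod 2$.
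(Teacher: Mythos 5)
Your proof is correct, and it takes a genuinely different and more economical route than the paper. You observe that the map $Spin(n)\times Spin(3)\to Spin(n+3)$ kills $(-1,-1)$ and hence descends to $Spin^h(n)\to Spin(n+3)$ over $SO(n)\times SO(3)\hookrightarrow SO(n+3)$, so a spin$^h$ structure on $P$ with canonical $SO(3)$-bundle $E$ equips $P\oplus E$ with a spin structure; this is exactly the pullback square that the paper records at the start of Section 3 and uses to prove the equivalence $(1)\Leftrightarrow(2)$ of Proposition \ref{equivalences}, but which the paper does \emph{not} use in its own proof of Proposition \ref{integralliftw4}. Having reduced to the spin case, you invoke the well-known $\tfrac{p_1}{2}\in H^4(BSpin;\ZZ)$ reducing mod~$2$ to $w_4$, and the rest is the Whitney formula together with $p_1\equiv w_2^2\pmod 2$ and $w_4(E)=0$ for the rank-$3$ bundle $E$ (both $P$ and $E$ being orientable, the additivity $p_1(P\oplus E)=p_1(P)+p_1(E)$ holds on the nose, with no $2$-torsion correction). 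By contrast, the paper works universally in $H^*(BSpin^h(n))$: it invokes Wu's formula for the Pontryagin square, runs a Serre spectral sequence to show $H^4(BSpin^h(n);\ZZ)$ is torsion-free, and separately argues that a potential $Sq^1$ correction term vanishes. What your approach buys is brevity and transparency, at the cost of importing the $BSpin$ integral lift as a black box; what the paper's approach buys is a self-contained derivation that, incidentally, also produces the transgression statement used in Corollary \ref{spinhW5}. One small wording issue: the map $Spin(n)\times Spin(3)\to Spin(n+3)$ is not an \emph{inclusion} (its kernel is the $\ZZ_2$ you quotient by), so you should call it a homomorphism before noting it descends to an injection of $Spin^h(n)$. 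Also, for the ``well-known fact'' I would point to the paper's own Remark \ref{integrallifts} or a source such as \cite[Eqs. 5.2 and 5.3]{HHLZ20} rather than \cite{LM}, which does not state it in quite this form.
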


\begin{proof}

We assume $n \geq 5$ since otherwise $W_5 = 0$ for any orientable vector bundle (for $n\leq 3$ this follows since $W_5 = \beta w_4 = 0$, and for $n=4$ the Euler class is an integral lift of $w_4$, and hence $W_5 = \beta w_4 = 0$). We will show that $W_5$ pulls back to 0 under the map $p \colon BSpin^h(n) \to BSO(n)$ induced by the projection $Spin^h(n) \to SO(n)$. Note that by assumption the map $X \to BSO(n)$ classifying $P$ lifts through the map $p$, and so the claim will follow.

Let $\hat{P}$ denote the $SO(n)$-bundle classified by the map $p \colon BSpin^h(n) \to BSO(n)$. By construction, $\hat{P}$ has a spin$^h$ structure with canonical $SO(3)$-bundle classified by the map $BSpin^h(n) \to BSO(3)$ induced by the projection $Spin^h(n) \to SO(3)$. Now consider the Pontryagin square operation $\mathfrak{P} \colon H^2(-; \ZZ_2) \to H^4(-;\ZZ_4)$, which by a theorem of Wu satisfies $$\mathfrak{P}(w_2(V)) = \rho_4(p_1(V)) + i_*(w_4(V))$$ for any orientable vector bundle $V$, where $\rho_4$ and $i_*$ denote the maps on cohomology corresponding to the coefficient homomorphisms $\ZZ \to \ZZ_4$ given by reduction mod 4 and the inclusion $i \colon \ZZ_2 \to \ZZ_4$ respectively, see \cite[Theorem C]{Th60}. Since $w_2(\hat{P}) = w_2(\hat{E})$ and $w_4(\hat{E}) = 0$, applying the above formula we have \begin{align*}  i_*(w_4(\hat{P})) &= \mathfrak{P}(w_2(\hat{P})) - \rho_4(p_1(\hat{P})) \\ &= \mathfrak{P}(w_2(\hat{E})) - \rho_4(p_1(\hat{P})) \\ &= \rho_4(p_1(\hat{E})) - \rho_4(p_1(\hat{P})),\end{align*} i.e. $i_*(w_4(\hat{P}))$ has an integral lift, namely $p_1(\hat{E}) - p_1(\hat{P})$.

Now consider the map of short exact sequences $$\begin{tikzcd}
0 \arrow[r] \arrow[d] & \ZZ \arrow[d] \arrow[r] & \ZZ \arrow[d] \arrow[r] & \ZZ_2 \arrow[d] \arrow[r] & 0 \arrow[d] \\
0 \arrow[r]           & \ZZ_2 \arrow[r]         & \ZZ_4 \arrow[r]         & \ZZ_2 \arrow[r]           & 0          
\end{tikzcd}$$

and the induced map of long exact sequences in cohomology $$\begin{tikzcd}
H^3(BSpin^h(n);\ZZ_2) \arrow[r] \arrow[d, "="] & H^4(BSpin^h(n);\ZZ) \arrow[d, "\rho_2"] \arrow[r] & H^4(BSpin^h(n);\ZZ) \arrow[d, "\rho_4"] \arrow[r] & H^4(BSpin^h(n);\ZZ_2) \arrow[d, "="]  \\
H^3(BSpin^h(n);\ZZ_2) \arrow[r, "Sq^1"]   & H^4(BSpin^h(n);\ZZ_2) \arrow[r, "i_*"]       & H^4(BSpin^h(n);\ZZ_4) \arrow[r]              & H^4(BSpin^h(n);\ZZ_2)               
\end{tikzcd}$$

where $\rho_2$ is induced by reduction mod 2. Since $i_*(w_4(\hat{P}))$ maps to zero in $H^4(BSpin^h(n);\ZZ_2)$ by exactness, we see that $p_1(\hat{E}) - p_1(\hat{P})$ also maps to zero by commutativity. Therefore, there is a class in $H^4(BSpin^h(n);\ZZ)$ mapping to $p_1(\hat{E}) - p_1(\hat{P})$ under multiplication by two.

After converting $p \colon BSpin^h(n) \to BSO(n)$ to a fibration, it follows from the Serre spectral sequence that $H^4(BSpin^h(n);\ZZ) \cong \ZZ \oplus \ZZ$. Indeed, $H^4(BSO(n);\ZZ) \cong \ZZ$ and the homotopy fiber of $p$ is the classifying space of the kernel of the map $Spin^h(n) \to SO(n)$, i.e. $BSp(1) \simeq \HP^{\infty}$. Since $H^5(BSO(n);\ZZ) \cong \ZZ_2$, generated by $W_5$ \cite[Theorem 1.5]{Brown}, either the degree 4 generator of $H^*(\HP^\infty; \ZZ)$ (which is unique up to sign) is closed throughout the spectral sequence, or it trangresses to $W_5$, in which case twice the generator will be closed. (We will argue later that it is in fact the latter that happens.) In either case, $E_\infty^{0,4} \cong \ZZ$. The extension problem in the spectral sequence thus has a unique solution of $H^4(BSpin^h(n);\ZZ) \cong \ZZ \oplus \ZZ$. In particular, as there is no 2--torsion, we may unambigiously write $\tfrac{1}{2}\left( {p_1(\hat{E}) - p_1(\hat{P})}\right)$ for the class mapping to $p_1(\hat{E}) - p_1(\hat{P})$.

By commutativity of the diagram, we now have $\rho_2\left( \tfrac{1}{2}\left( p_1(\hat{E}) - p_1(\hat{P})\right) \right) - w_4(\hat{P}) \in \ker i_*$, and so there is an element $b \in H^3(BSpin^h(n);\ZZ_2)$ such that $w_4(\hat{P}) = \rho_2\left( \tfrac{1}{2}\left(p_1(\hat{E}) - p_1(\hat{P})\right) \right) + Sq^1b$. It is true that $Sq^1 b$ has an integral lift in general (namely the Bockstein of $b$), but we will see in fact that this term vanishes. To this end, notice that $p^* \colon H^3(BSO(n);\ZZ_2) \to H^3(BSpin^h(n);\ZZ_2)$ is an isomorphism since the homotopy fiber $\HP^\infty$ of $p$ is 3--connected. Since $H^3(BSO(n);\ZZ_2)$ is spanned by $w_3$, and $Sq^1 w_3 = 0$ (the Bockstein of $w_2$, i.e. $W_3$, is an integral lift of $w_3$), by the naturality of $Sq^1$ we conclude that $Sq^1$ vanishes on $H^3(BSpin^h(n);\ZZ_2)$. Therefore $\tfrac{1}{2}\left(p_1(\hat{E}) - p_1(\hat{P})\right)$ is an integral lift of $w_4$, and so is $\tfrac{1}{2}\left( p_1(\hat{P}) - p_1(\hat{E})\right)$. The classes $p_1(\hat{P})$ and $p_1(\hat{E})$ pull back to $p_1(P)$ and $p_1(E)$ on $X$, respectively, and so $w_4(P)$ has integral lift $\tfrac{1}{2}\left( p_1(P) - p_1(E)\right)$; in particular $W_5(P) = 0$. \end{proof}

\begin{corollary}\label{W5} On a spin${}^h$ manifold, $W_5 = 0$.\end{corollary}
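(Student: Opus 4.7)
The plan is to apply Proposition \ref{integralliftw4} directly to the oriented orthonormal frame bundle of $M$. By definition of a spin$^h$ manifold, after choosing a Riemannian metric $g$ and an orientation (to which, as noted in the text, the notion of a spin$^h$ structure is insensitive), the principal $SO(n)$-bundle $P$ of oriented orthonormal frames of $(M,g)$ admits a spin$^h$ structure. Proposition \ref{integralliftw4} then gives $W_5(P) = 0$, and since by definition $W_5(M) = W_5(TM) = W_5(P)$, the conclusion is immediate.

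One can also view this through the second half of the proposition: the spin$^h$ structure produces an explicit integral lift $\tfrac{1}{2}(p_1(P) - p_1(E))$ of $w_4$, where $E$ is the canonical $SO(3)$-bundle. Since $W_5 = \beta w_4$ is precisely the obstruction to $w_4$ admitting an integral lift, the existence of such a lift forces $W_5 = 0$. The entire content of the corollary is thus carried by the preceding proposition, so there is no genuine obstacle to overcome — the proof is a one-line specialization.
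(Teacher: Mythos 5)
Your proposal is correct and matches the paper's intent: the corollary is stated immediately after Proposition \ref{integralliftw4} with no separate proof precisely because it is the direct specialization of that proposition to the oriented orthonormal frame bundle of the manifold. Both of your viewpoints (citing the $W_5(P)=0$ conclusion directly, or observing that the explicit integral lift of $w_4$ kills the Bockstein $W_5 = \beta w_4$) are sound and equivalent to what the paper has in mind.
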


Note that this gives a topological restriction on almost quaternionic manifolds. In the $8m$--dimensional case, almost quaternionic manifolds are spin and such manifolds are known to have $W_5 = 0$, see Remark \ref{integrallifts}.

\begin{corollary}\label{spinhW5} The primary obstruction to a principal $SO(n)$-bundle $P$ admitting a spin${}^h$ structure is $W_5(P)$. \end{corollary}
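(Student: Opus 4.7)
The plan is to interpret this as a statement about the universal primary obstruction class in the obstruction-theoretic tower for lifting classifying maps along $p : BSpin^h(n) \to BSO(n)$, and to identify that class with $W_5$ by running the Serre spectral sequence against the input provided by Proposition \ref{integralliftw4}. The case $n \leq 4$ is vacuous since $W_5 = 0$ on $BSO(n)$ for such $n$, so I would assume $n \geq 5$.

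The short exact sequence $1 \to Sp(1) \to Spin^h(n) \to SO(n) \to 1$ (with kernel $\{[1,q] : q \in Sp(1)\} \cong Sp(1)$) gives the fibration $\HP^\infty \simeq BSp(1) \to BSpin^h(n) \xrightarrow{p} BSO(n)$. Since the fiber $\HP^\infty$ is $3$-connected with $\pi_4(\HP^\infty) = \pi_3(Sp(1)) = \ZZ$, the primary obstruction to lifting the classifying map $f : X \to BSO(n)$ of a principal $SO(n)$-bundle $P$ through $p$ lies in $H^5(X; \ZZ)$. By naturality it equals $f^*(\omega)$ for a universal class $\omega \in H^5(BSO(n); \ZZ)$, which by \cite[Theorem 1.5]{Brown} is isomorphic to $\ZZ_2$ and generated by $W_5$. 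Moreover, $\omega$ coincides with the transgression $d_5(u)$ of the generator $u \in H^4(\HP^\infty; \ZZ)$ in the Serre spectral sequence, so $\omega \in \{0, W_5\}$.

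The main step is to rule out $\omega = 0$. I would apply Proposition \ref{integralliftw4} to the $SO(n)$-bundle $\hat P$ classified by $p$ itself, which carries a tautological spin$^h$ structure, yielding $p^*W_5 = W_5(\hat P) = 0$ in $H^5(BSpin^h(n); \ZZ)$. On the other hand, in the Serre spectral sequence, $E_\infty^{p,q}$ vanishes for every $(p,q)$ with $p + q = 5$ other than $(5,0)$, because $BSO(n)$ is simply connected and $H^i(\HP^\infty; \ZZ) = 0$ for $i \in \{1,2,3,5\}$. Hence $H^5(BSpin^h(n); \ZZ) \cong E_\infty^{5,0}$, and the edge map $p^*$ factors as the quotient $H^5(BSO(n); \ZZ) \twoheadrightarrow H^5(BSO(n); \ZZ)/\mathrm{im}(d_5) \cong E_\infty^{5,0}$. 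Since $p^*W_5 = 0$, the class $W_5$ must lie in $\mathrm{im}(d_5)$, forcing $d_5(u) = W_5$ and therefore $\omega = W_5$.

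Everything before the last paragraph is routine obstruction theory and spectral sequence bookkeeping; the only real content is that short deduction, which directly upgrades the vanishing $p^*W_5 = 0$ furnished by the previous proposition into the identification of the universal transgression, thereby pinning down the primary obstruction as $W_5(P)$ and also resolving the dichotomy left open in the proof of Proposition \ref{integralliftw4}.
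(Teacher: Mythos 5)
Your proof is correct and takes essentially the same route as the paper: both identify the primary obstruction with a class in $H^5(BSO(n);\ZZ)\cong\ZZ_2$ generated by $W_5$, and rule out the zero option using $p^*W_5 = 0$, which Proposition \ref{integralliftw4} supplies for the tautological bundle over $BSpin^h(n)$. The only cosmetic difference is that you carry out the final deduction via the Serre spectral sequence edge map and the transgression $d_5(u)$, whereas the paper invokes the Moore--Postnikov tower; these are equivalent phrasings of the same argument.
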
 

\begin{proof} Since the homotopy fiber $\HP^\infty$ of $p \colon BSpin^h(n) \to BSO(n)$ is 3--connected, the primary obstruction to lifting is the pullback of a class in $H^5(BSO(n);\pi_4(\HP^\infty)) \cong H^5(BSO(n);\ZZ)$, as can be seen from the Moore--Postnikov tower for $p$ \cite[Section 8.3]{Span89}. Recall, $H^5(BSO(n);\ZZ)$ is spanned by the order two element $W_5$. If the primary obstruction to a spin${}^h$ structure were 0, then from the Moore--Postnikov tower for $p$ we see that $p^*W_5$ would be non-zero in $H^5(BSpin^h(n);\ZZ)$; however $p$ classifies the principal $SO(n)$-bundle associated to the tautological $Spin^h(n)$ bundle over $BSpin^h(n)$, and so as we saw above, $p^*W_5$ must vanish as this bundle admits a $Spin^h$ structure. In particular this shows that the degree $4$ generator of $H^*(\HP^\infty;\ZZ)$ trangresses to $W_5$ in the Serre spectral sequence, as commented earlier. \end{proof}

\begin{proposition}\label{5manifolds} Every orientable manifold of dimension 5 is spin$^h$. \end{proposition}

\begin{proof}
Any orientable 5--manifold $M$ has torsion-free $H^5(M;\ZZ)$ (namely $\ZZ$ in the closed case and $0$ otherwise). Since the primary obstruction to a spin${}^h$ structure $W_5$ is torsion, it vanishes on $M$. The higher obstructions to a spin${}^h$ structure lie in $H^{\ast \geq 6}(M;\pi_{\ast-1}\HP^\infty)$ and so they all vanish on $M$ as well.
\end{proof}

\begin{remark}\label{integrallifts} In the statement of Proposition \ref{integralliftw4}, we chose to denote the integral lift of $w_4$ as $\tfrac{1}{2}\left(p_1(P) - p_1(E)\right)$ instead of its negative. In this form the integral lift visibly generalises the known integral lift of $w_4$ of the tangent bundle on spin and spin${}^c$ manifolds. For spin manifolds, it is known that an integral lift of $w_4$ is provided by $\tfrac{1}{2}p_1$. For spin${}^c$ manifolds, consider the fibration $\CP^{\infty} \to BSpin^c(n) \to BSO(n)$, and denote the degree 2 generator of $H^*(\CP^\infty; \ZZ)$ by $x$. The transgression of $x$ in the Serre spectral sequence is $W_3$. Denoting by $c$ the unique integral cohomology class in $BSpin^c$ that restricts to $2x$ on the fiber, we have $\tfrac{1}{2}\left( p_1 - c^2\right) \equiv w_4 \bmod 2$ in $BSpin^c$ \cite[Eqs. 5.2 and 5.3]{HHLZ20}; note that $c$ classifies the canonical $SO(2)$-bundle determined by the projection $Spin^c(n) \to SO(2)$. 

If $L$ is the complex line bundle associated to a spin$^c$ structure, then since $p_1(L) = c_1(L)^2$, we see that the integral lift of $w_4$ given by the spin$^c$ structure is consistent with the integral lift given by the induced spin$^h$ structure; likewise for spin.
\end{remark}

In the case of spin and spin${}^c$ structures on an orientable manifold, there is a single obstruction to admitting such a structure, namely $w_2$ and $W_3$ respectively. However, there is no reason to expect that the primary obstruction $W_5$ is the only obstruction to a spin${}^h$ structure, as the homotopy fiber of the relevant obstruction theoretic problem is not an Eilenberg--MacLane space; note that the homotopy fiber of $BSpin(n) \to BSO(n)$ is $B\mathbb{Z}_2 = K(\mathbb{Z}_2, 1)$ and the homotopy fiber of $BSpin^c(n) \to SO(n)$ is $BU(1) = K(\mathbb{Z}, 2)$. In fact, there are further obstructions to spin$^h$ structures; see Remark \ref{obstruction}. Due to the lack of a simple criterion for the existence of spin$^h$ structures, to the best of the authors' knowledge, there are no known examples of orientable manifolds which are not spin$^h$. Below, we consider an explicit manifold with $W_5 \neq 0$, and hence by Corollary \ref{W5}, it provides an example of an orientable non-spin$^h$ manifold.

\begin{example}\label{WuxWu} Consider the product $W\times W$ of the 5--dimensional Wu manifold $W = SU(3)/SO(3)$ with itself. If $W\times W$ were to admit a spin${}^h$ structure, then $w_4(W\times W)$ would admit an integral lift. Denoting the projections onto the two factors of $W$ by $\pi_1$ and $\pi_2$, we have $w_4(W\times W) = \pi_1^*w_2(W)\pi_2^*w_2(W)$. Note that $H^4(W\times W;\ZZ) = 0$ by the K\"unneth formula, so if $\pi_1^*w_2(W)\pi_2^*w_2(W)$ were to admit an integral lift it would be zero. However, $\pi_1^*w_2(W)\pi_2^*w_2(W)$ is the unique non-zero element in $H^4(W\times W;\ZZ_2)$ since $w_2(W)$ is the unique non-zero element in $H^2(W;\ZZ_2)$. Therefore $W_5(W\times W) \neq 0$. 

Notice that this example also shows that the product of two spin${}^h$ manifolds need not be spin${}^h$, in contrast to the case of spin and spin${}^c$ manifolds.
\end{example}

Naturally one is led to ask whether there are non-spin${}^h$ manifolds of smaller dimension. We will address this in the following sections.

\section{Spin${}^k$ structures}

The existence of orientable manifolds which are not spin$^h$ can be viewed as motivation to consider further generalisations of spin structures. Note that $Spin(n) \cong (Spin(n)\times O(1))/\mathbb{Z}_2$ while $Spin^c(n) = (Spin(n)\times U(1))/\mathbb{Z}_2$ and $Spin^h(n) = (Spin(n)\times Sp(1))/\mathbb{Z}_2$, so one can regard these three families of groups as real, complex, and quaternionic families respectively. It is natural to ask whether an octonionic analogue exists, at least for some values of $n$. The natural inclination is to replace the second factor with the collection of automorphisms of the octonions $\mathbb{O}$ which preserve the hermitian form $\langle x, y\rangle = x\overline{y}$. This collection is precisely $S(\mathbb{O})$, the octonions of unit norm. Due to the failure of associativity, this is not a group but rather a Moufang loop. One could attempt to circumvent this difficulty in a variety of ways, but we will not do so here.

There is another way that one can generalise the three families of groups above. Note that 
\begin{align*}
Spin(n) &\cong (Spin(n)\times O(1))/\mathbb{Z}_2 \cong (Spin(n)\times Spin(1))/\mathbb{Z}_2\\
Spin^c(n) &= (Spin(n)\times U(1))/\mathbb{Z}_2 \cong (Spin(n)\times Spin(2))/\mathbb{Z}_2\\
Spin^h(n) &= (Spin(n)\times Sp(1))/\mathbb{Z}_2 \cong (Spin(n)\times Spin(3))/\mathbb{Z}_2.
\end{align*}
For any positive integer $k$, define $Spin^k(n) := (Spin(n)\times Spin(k))/\mathbb{Z}_2$ where the $\mathbb{Z}_2$ subgroup is generated by $(-1, -1)$. For $k = 1, 2, 3$ we see that $Spin^k(n)$ is isomorphic to $Spin(n)$, $Spin^c(n)$, and $Spin^h(n)$ respectively. For every $k$, there are short exact sequences $$0 \to \mathbb{Z}_2 \to Spin^k(n) \to SO(n)\times SO(k) \to 0$$ and $$0 \to Spin(k) \to Spin^k(n) \to SO(n) \to 0.$$

We also have pullback diagrams
\[\begin{tikzcd}
Spin^k(n) \arrow{r} \arrow{d} & Spin(n+k) \arrow{d} &  & Spin^k(n) \arrow{r} \arrow{d} & Spin(n, k)\arrow{d}\\
SO(n)\times SO(k) \arrow{r} & SO(n+k) &  & SO(n)\times SO(k) \arrow{r} & SO(n, k).
\end{tikzcd}
\] 
It follows from the second diagram that $Spin^k(n)$ is a maximal compact subgroup of $Spin(n, k)$. %see Proposition 17.3.1 of Hilgert and Neeb

The groups $Spin^k(n)$ have been considered before \cite{EH16}, as well as in \cite{Mayer} and \cite[Section III]{McInnes} where they were denoted by $G(k, n)$ and $Spin(n)\cdot Spin(k)$ respectively. The group $Spin^4(4)$ was denoted by $\overline{\overline{Spin}}(4)$ in \cite{BFF}. More generally, for a Lie group $G$ with a central subgroup isomorphic to $\mathbb{Z}_2$, the group $(Spin(4)\times G)/\mathbb{Z}_2$ was considered in \cite{AI} and denoted $Spin_G(4)$.

\begin{definition}
A \textit{spin$^k$ structure} on a principal $SO(n)$-bundle $P$ is a principal $SO(k)$-bundle $E$ together with a principal $Spin^k(n)$-bundle $Q$ and a double covering $Q \to P\times E$ which is equivariant with respect to the homomorphism $Spin^k(n) \to SO(n)\times SO(k)$. Equivalently, a spin$^k$ structure is a reduction of structure group along the homomorphism $Spin^k(n) \to SO(n)$. We call $E$ the \textit{canonical $SO(k)$-bundle} of the spin$^k$ structure.
\end{definition}

A spin$^k$ structure on an oriented Riemannian manifold is defined to be a spin$^k$ structure on its bundle of orthonormal frames. The existence of a spin$^k$ structure is independent of the choice of orientation and Riemannian metric, so we just speak of a spin$^k$ structure on the underlying smooth manifold and call the manifold spin$^k$ if it admits such a structure.

Note that for $l < k$, a spin$^l$ structure induces a spin$^k$ structure as there is a homomorphism $Spin^l(n) \to Spin^k(n)$, induced by the natural inclusion $Spin(l) \to Spin(k)$, which commutes with the projections to $SO(n)$. Moreover, the canonical $SO(k)$-bundle of the induced spin$^k$ structure is $E\oplus\varepsilon^{k-l}$ where $E$ is the canonical $SO(l)$-bundle of the spin$^l$ structure.
 
%Recall that in Proposition \ref{fivespinh} we showed that every orientable 5--manifold is spin$^h$ by observing that such manifolds immerse in a spin manifold with codimension three, namely $\mathbb{R}^8$. In general we have the following: 

\begin{proposition}\label{equivalences}
The following are equivalent for a smooth manifold $M$:
\begin{enumerate}
\item $M$ is spin$^k$,
\item there is an orientable rank $k$ vector bundle $E \to M$ such that $TM\oplus E$ is spin, i.e. $w_2(E) = w_2(TM)$,
\item $M$ immerses in a spin manifold with codimension $k$,
\item $M$ embeds in a spin manifold with codimension $k$.
\end{enumerate}
\end{proposition}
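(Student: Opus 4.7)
The plan is to establish $(1) \Leftrightarrow (2)$ using the first pullback square displayed earlier in this section, and then to close the cycle via the geometric implications $(2) \Rightarrow (4) \Rightarrow (3) \Rightarrow (2)$. Throughout, orientability of $M$ is built into each of the four conditions (for $(1)$ and $(2)$ this is explicit, and for $(3)$ and $(4)$ it is interpreted implicitly, matching the convention of the rest of the section).

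For $(1) \Leftrightarrow (2)$: the pullback square exhibits $Spin^k(n)$ as the pullback of $SO(n) \times SO(k) \to SO(n+k) \leftarrow Spin(n+k)$. By its universal property, reducing the oriented frame bundle of $TM$ along $Spin^k(n) \to SO(n)$ is equivalent to specifying, compatibly, an orientable rank $k$ bundle $E \to M$ (providing the $SO(n)\times SO(k)$-reduction of the oriented frame bundle of $TM \oplus E$) together with a spin structure on $TM \oplus E$ (providing the lift to $Spin(n+k)$). A spin structure on $TM \oplus E$ exists iff $w_2(TM \oplus E) = w_2(TM) + w_2(E) = 0$, i.e.\ $w_2(E) = w_2(TM)$. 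At the level of principal bundles, one identifies the $Spin^k(n)$-bundle $Q$ in the definition with the pullback of the spin structure's $Spin(n+k)$-bundle along the map from the product of the frame bundles of $TM$ and $E$ to the frame bundle of $TM \oplus E$; this produces the required equivariant double cover $Q \to P \times E$.

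For $(2) \Rightarrow (4)$: let $N$ be the total space of $E$, a smooth $(n+k)$-manifold. The zero section $M \hookrightarrow N$ is a codimension $k$ embedding and a homotopy equivalence, so restriction along it is an isomorphism on mod $2$ cohomology sending $w_i(TN)$ to $w_i(TM \oplus E)$; hence $N$ is orientable and spin. The implication $(4) \Rightarrow (3)$ is immediate. For $(3) \Rightarrow (2)$: an immersion $f \colon M \to N$ of codimension $k$ into a spin manifold has a rank $k$ normal bundle $\nu$ with $TM \oplus \nu \cong f^*TN$, which forces $w_1(\nu) = 0$ (so $\nu$ is orientable) and $w_2(\nu) = w_2(TM)$.

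The main obstacle is the step $(1) \Leftrightarrow (2)$, where one must verify the correspondence at the level of principal bundles, not merely classifying spaces, so as to match the definition of spin$^k$ structure verbatim. The other implications are essentially formal, relying only on the zero-section and normal-bundle constructions.
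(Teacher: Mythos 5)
Your proof is correct and follows essentially the same route as the paper: the $(1)\Leftrightarrow(2)$ equivalence via the pullback square (which is the paper's first of two arguments for this step; the paper also gives an alternative using non-abelian cohomology and a diagram of short exact sequences), and the zero-section/normal-bundle constructions for the remaining implications. The only cosmetic difference is that you close the cycle as $(2)\Rightarrow(4)\Rightarrow(3)\Rightarrow(2)$ whereas the paper proves $(2)\Leftrightarrow(4)$ and then $(3)\Leftrightarrow(4)$, but the underlying ideas are identical, and you correctly flag the implicit orientability convention needed for the normal-bundle computation.
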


\begin{proof}
$(1) \Longleftrightarrow (2)$: This follows immediately from the first pullback diagram above, together with the fact that reductions of structure groups along two homomorphisms give rise to a reduction of structure group to the pullback. We also provide a second proof.

There is a commutative diagram of groups with exact rows

\[\begin{tikzcd}
0 \arrow{r} & \mathbb{Z}_2 \arrow{r} \arrow[swap]{d}{\id} & Spin(n) \arrow{r} \arrow{d} & SO(n) \arrow{d}{i_1} \arrow{r} & 0\\
0 \arrow{r} & \mathbb{Z}_2 \arrow{r} & Spin^k(n) \arrow{r} & SO(n)\times SO(k) \arrow{r} & 0\\
0 \arrow{r} & \mathbb{Z}_2 \arrow{r}\arrow{u}{\id} & Spin(k) \arrow{r} \arrow{u} & SO(k) \arrow[swap]{u}{i_2} \arrow{r} & 0
\end{tikzcd}
\]

where $i_1 \colon SO(n) \to SO(n)\times SO(k)$ and $i_2 \colon SO(k) \to SO(n)\times SO(k)$ are the natural inclusions. This gives rise to a commutative diagram in cohomology with exact rows 

\[\begin{tikzcd}
H^1(X; Spin(n)) \arrow{r} \arrow{d} & H^1(X; SO(n)) \arrow{r}{w_2} \arrow[swap]{d}{(i_1)_*} & H^2(X; \mathbb{Z}_2) \arrow{d}{\id}\\
H^1(X; Spin^k(n)) \arrow{r} & H^1(X; SO(n))\oplus H^1(X; SO(k)) \arrow{r}{\rho} & H^2(X; \mathbb{Z}_2)\\
H^1(X; Spin(k)) \arrow{r} \arrow{u} & H^1(X; SO(k)) \arrow{r}{w_2} \arrow{u}{(i_2)_*} & H^2(X; \mathbb{Z}_2) \arrow[swap]{u}{\id}.
\end{tikzcd}
\]

By commutativity, we see that $\rho([E_1] + [E_2]) = \rho([E_1]) + \rho([E_2]) = w_2(E_1) + w_2(E_2)$. It follows that $M$ admits a spin$^k$ structure if and only if there is an orientable rank $k$ bundle $E$ such that $w_2(E) = w_2(M)$.

$(2) \Longleftrightarrow (4)$: Suppose $\pi \colon E \to M$ is an orientable rank $k$ vector bundle with $w_2(E) = w_2(TM)$. It follows from the isomorphism $TE \cong \pi^*TM\oplus\pi^*E$ that the total space $E$ is a spin manifold. Note that $M$ embeds in $E$ as the zero section with codimension $k$. Conversely, if $M$ embeds in a spin manifold $N$ with codimension $k$, then one can take $E$ to be the normal bundle. Note that $E$ is an orientable rank $k$ vector bundle and $0 = w_2(TN|_M) = w_2(TM\oplus E) = w_2(TM) + w_1(TM)w_1(E) + w_2(E) = w_2(TM) + w_2(E)$, so $w_2(E) = w_2(TM)$.

$(3) \Longleftrightarrow (4)$: Clearly $(4)\Longrightarrow (3)$. For the converse, note that if $M$ immerses in a spin manifold with codimension $k$, then it embeds as the zero section of the normal bundle with codimension $k$, and the total space of the normal bundle is a spin manifold.
\end{proof}

Note, if $M$ is a closed spin$^k$ manifold, then it can be embedded in a closed spin manifold with codimension $k$. To see this, let $E$ be the normal bundle of a codimension $k$ embedding of $M$ in a spin manifold. Not only does $M$ embed in the total space of $E$, it also embeds in $S(E\oplus\varepsilon^1)$, the sphere bundle of $E\oplus\varepsilon^1$ which is a closed spin manifold with the same dimension as $E$.

\begin{example}
A closed orientable 7--manifold $M$ admits two linearly independent vector fields, see \cite[Theorem 1.1]{Th67}. If $M$ is a $G_2$ manifold, i.e. there is a reduction of structure group with respect to the inclusion $G_2 \to SO(7)$, then one can produce a third vector field via the cross product. It follows that there is an orientable rank four vector bundle $E$ which is stably equivalent to $TM$, and hence $w_2(E) = w_2(TM)$. The existence of such a bundle is not necessarily surprising as $G_2$ manifolds are spin (one could take the trivial rank four bundle). However, if $M$ is a closed torsion-free $G_2$ manifold, then $p_1(M) \neq 0$ and hence $p_1(E)\neq 0$, so $M$ has a spin$^4$ structure with non-trivial canonical $SO(4)$-bundle; in particular, such a spin$^4$ structure is not induced by a spin structure.
\end{example}

\begin{remark}
Given a homomorphism $G \to SO(n+k)$, let $H \to SO(n)\times SO(k)$ be the pullback with respect to the inclusion $SO(n)\times SO(k) \to SO(n+k)$; for $G = Spin(n+k)$, the corresponding pullback is $Spin^k(n)$. There is an induced homomorphism $H \to SO(n)$ and one can consider orientable $n$--dimensional manifolds which admit a reduction of structure group with respect to this homomorphism. For such manifolds, there is an analogue of Proposition \ref{equivalences}. 

For example, setting $n = 6$, $k = 1$, and $G = G_2$ we see that $H = G_2\cap SO(6) = SU(3)$ and we recover the fact that an orientable hypersurface of a $G_2$ manifold admits an $SU(3)$-structure; in particular, the result of Calabi \cite{Calabi} that an orientable hypersurface of $\mathbb{R}^7$ admits an almost complex structure follows immediately. Similarly, for $n = 7$, $k = 1$, and $G = Spin(7)$ we see that $H = Spin(7)\cap SO(7) = G_2$, and hence every orientable hypersurface of a $Spin(7)$ manifold is $G_2$.
\end{remark}

For any manifold $M$, there is a unique line bundle $L \to M$ with $w_1(L) = w_1(M)$ and the sphere bundle of $L$ is precisely the orientable double cover of $M$. By the above proposition, if $M$ is spin$^k$, then there is an orientable rank $k$ vector bundle $\pi \colon E \to M$ with $w_2(E) = w_2(M)$. Letting $N$ denote the sphere bundle of $E$, we have $TN\oplus\varepsilon^1 \cong TE|_N \cong \pi^*(TM\oplus E)|_N$, from which it follows that $N$ is spin. When $E$ is the canonical $SO(3)$-bundle of the spin$^h$ structure on an $(8m+4)$--dimensional almost quaternionic manifold $M$ as mentioned in Example \ref{AQ}, the resulting manifold $N$ is the twistor space\footnote{The twistor space of an $8m$--dimensional almost quaternionic manifold need not be spin in general; it will be spin if and only if the Marchiafava--Romani class is zero.} of $M$. In general, the sphere bundle $S^{k-1} \to N \to M$ can be considered a spin analogue of the orientable double cover. Just as one chooses a line bundle $L$ with $w_1(L) = w_1(M)$ when constructing the orientable double cover (as opposed to a higher rank bundle), one should take the smallest possible value of $k$ if one hopes to obtain a naturally associated spin analogue. Even then, there can be more than one orientable rank $k$ bundle $E$ with $w_2(E) = w_2(M)$, and different choices can give rise to different sphere bundles. For example, $\mathbb{CP}^2$ is spin$^c$ and $w_2(\mathcal{O}(2k+1)) = w_2(\mathbb{CP}^2)$ for every $k$, with corresponding sphere bundles $S^1 \to S^5/\mathbb{Z}_{2k+1} \to \mathbb{CP}^2$.

%By combining Proposition \ref{equivalences} with the Cohen immersion theorem, Proposition \ref{fivespinh} generalises to the observation that every orientable $n$--manifold is spin$^{n-\alpha(n)}$ (where, as before, $\alpha(n)$ is equal to the number of ones in the binary expansion of $n$). Note that this is not necessarily optimal; for example, every orientable 4--manifold is spin$^c$, but $4 - \alpha(4) = 3$. In fact, every closed orientable smooth 4--manifold embeds in $\mathbb{CP}^3$ (which is spin) \cite{GP}. However, one should not expect a universal embedding statement for closed spin$^k$ manifold: for example, there is no closed 4--manifold (spin or otherwise) in which every closed spin 3--manifold embeds \cite{Shi91}.

%Number of different spin$^k$ structures. 

\begin{remark} As in the spin$^h$ case, since the homotopy fiber of $BSpin^k(n) \to BSO(n)$ is not an Eilenberg--Maclane space for $k\geq 3$, there will generally be obstructions to the existence of such a structure in multiple degrees (in contrast to the cases $k=1,2$). It is for this reason that one should also not expect a characterisation of spin${}^k$ structures in terms of low-dimensional skeleta (cf. \cite{Go97} for the case of spin and spin${}^c$).\end{remark}

We now consider how the spin$^k$ property behaves under products and connected sums.

\begin{proposition}\label{product} Let $M$ and $N$ be smooth manifolds.
\begin{enumerate} \item If $M \times N$ is spin$^k$, then $M$ and $N$ are spin$^k$. \item If $M$ is spin, then $M\times N$ is spin${}^k$ if and only if $N$ is spin${}^k$. \item If $M$ is spin${}^k$ and $N$ is spin${}^l$, then $M\times N$ is spin${}^{k+l}$.
\end{enumerate}
\end{proposition}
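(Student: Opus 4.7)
The plan is to reduce all three parts to Proposition \ref{equivalences}: a manifold $X$ is spin$^k$ precisely when it admits an orientable rank $k$ vector bundle $E \to X$ with $w_2(E) = w_2(TX)$. Letting $\pi_M$ and $\pi_N$ denote the projections from $M \times N$ to its factors, the splitting $T(M \times N) \cong \pi_M^* TM \oplus \pi_N^* TN$ gives $w_2(T(M \times N)) = \pi_M^* w_2(TM) + \pi_N^* w_2(TN)$, and each part becomes a matter of producing the right orientable vector bundle.

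For (1), I would fix a basepoint $y_0 \in N$ and consider the inclusion $i_M \colon M \to M \times N$ defined by $m \mapsto (m, y_0)$. Given a witnessing bundle $F \to M \times N$ of rank $k$ for a spin$^k$ structure, the pullback $i_M^* F$ is orientable of rank $k$ on $M$. Since $\pi_N \circ i_M$ is constant, $i_M^* \pi_N^* w_2(TN) = 0$, and so $w_2(i_M^* F) = i_M^* w_2(F) = w_2(TM)$, exhibiting $M$ as spin$^k$; the argument for $N$ is symmetric. For (2), the reverse implication is a special case of (1). In the forward direction, if $N$ is spin$^k$ via $E_N \to N$, then $\pi_N^* E_N \to M \times N$ is orientable of rank $k$, and the assumption $w_2(TM) = 0$ kills the $\pi_M^*$ summand, giving $w_2(\pi_N^* E_N) = \pi_N^* w_2(TN) = w_2(T(M \times N))$. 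For (3), given witnesses $E_M \to M$ and $E_N \to N$ of ranks $k$ and $l$, the natural candidate is $F := \pi_M^* E_M \oplus \pi_N^* E_N$, orientable of rank $k + l$. The Whitney sum formula reduces to $w_2(F) = \pi_M^* w_2(E_M) + \pi_N^* w_2(E_N)$ because the cross term $w_1(\pi_M^* E_M) \cdot w_1(\pi_N^* E_N)$ vanishes by orientability; substituting the defining identities for $E_M$ and $E_N$ yields exactly $w_2(T(M \times N))$.

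There is no genuine obstacle here beyond careful bookkeeping; the only points demanding mild attention are checking that constant-map pullbacks kill cohomology in positive degree (used in (1) and (2)), and that the cross term in the Whitney sum formula vanishes for orientable summands (used in (3)). All three parts follow directly once Proposition \ref{equivalences} is in hand.
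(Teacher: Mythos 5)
Your proof is correct and follows essentially the same route as the paper: reduce to the $w_2$-criterion of Proposition \ref{equivalences}, pull back along the inclusions $i_M, i_N$ for (1) and the converse of (2), push forward along the projections for the forward direction of (2) and for (3), and use the Whitney formula with the orientability observation to kill the cross term. The only stylistic difference is that the paper writes out the chain of equalities explicitly while you summarise, but the content is the same.
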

\begin{proof}
Throughout, let $i_M, i_N$ be natural inclusions of $M,N$ into $M\times N$, and $\pi_M, \pi_N$ the projections from $M\times N$. \newline \newline
(1) Let $E \to M \times N$ be an orientable rank $k$ vector bundle with $w_2(E) = w_2(M\times N)$. Then $i_M^*E \to M$ is an orientable rank $k$ vector bundle with $$w_2(i_M^*E) = i_M^*w_2(E) = i_M^*w_2(M\times N) = i_M^*(\pi_M^*w_2(M) + \pi_N^*w_2(N)) = w_2(M),$$ so $M$ is spin$^k$, and similarly for $N$.

(2) If $N$ is spin$^k$, then there is an orientable rank $k$ vector bundle $E \to N$ with $w_2(E) = w_2(TN)$. Now note that $$w_2(\pi_N^*E) = \pi_N^*w_2(E) = \pi_N^*w_2(TN) = w_2(\pi_N^*TN) = w_2(\pi_M^*TM \oplus \pi_N^*TN) = w_2(T(M\times N)),$$ where the penultimate equality used the fact that $M$ is spin. As $\pi_N^*E$ is orientable and has rank $k$, we see that $M\times N$ is spin$^k$. On the other hand, if $M\times N$ is spin${}^k$ then there is an orientable rank $k$ vector bundle $E \to M \times N$ with $w_2(E) = w_2(M\times N)$, so $$w_2(i_N^*E) = w_2(i_N^*T(M\times N)) = w_2(i_N^*(\pi_M^*TM \oplus \pi_N^* TN)) = w_2(\varepsilon^{\dim(M)} \oplus TN) = w_2(TN),$$ and hence $N$ is spin$^k$.

(3) Suppose $w_2(M) = w_2(E)$ and $w_2(N) = w_2(F)$ where $E \to M$ and $F \to N$ are orientable vector bundles of rank $k$ and $l$ respectively. Then $$w_2(M\times N) = \pi_M^*w_2(M) + \pi_N^*w_2(N) = \pi_M^*w_2(E) + \pi_N^*w_2(F) = w_2(\pi_M^*E\oplus\pi_N^*F)$$ and $\pi_M^*E\oplus\pi_N^*F$ has rank $k + l$.
\end{proof} 

Note that the converse of (1) is true for spin and spin$^c$ manifolds, but as Example \ref{WuxWu} demonstrates, it is not true in general: the product of spin$^h$ manifolds need not be spin$^h$.

\begin{proposition}\label{sum}
If $M$ and $N$ are spin$^k$, then $M\# N$ is also spin$^k$. 
\end{proposition}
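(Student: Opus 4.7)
The plan is to use Proposition \ref{equivalences}: it suffices to produce an orientable rank $k$ vector bundle $E \to M \# N$ with $w_2(E) = w_2(T(M \# N))$. By hypothesis and the same proposition, we have orientable rank $k$ bundles $E_M \to M$ and $E_N \to N$ satisfying $w_2(E_M) = w_2(TM)$ and $w_2(E_N) = w_2(TN)$. I would build $E$ by the obvious gluing construction.

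Concretely, let $D_M \subset M$ and $D_N \subset N$ be the embedded disks along which the connected sum is formed, and let $n = \dim M = \dim N$. Since each disk is contractible, the restrictions $E_M|_{D_M}$ and $E_N|_{D_N}$ are trivial, and in particular so are their further restrictions to the boundary spheres. Choosing trivializations that are compatible with the orientations and with the gluing diffeomorphism defining $M \# N$, I would glue $E_M|_{M \setminus \operatorname{int} D_M}$ to $E_N|_{N \setminus \operatorname{int} D_N}$ across the belt sphere $S^{n-1} \subset M \# N$, producing a globally defined orientable rank $k$ bundle $E$ on $M \# N$.

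To verify $w_2(E) = w_2(T(M \# N))$, I would use a Mayer--Vietoris argument. Take open thickenings $U \supset M \setminus \operatorname{int} D_M$ and $V \supset N \setminus \operatorname{int} D_N$ in $M \# N$, so that $U \cup V = M \# N$ and $U \cap V \simeq S^{n-1}$. By construction, $E|_U$ is canonically identified with $E_M|_{M \setminus \operatorname{int} D_M}$, while $T(M \# N)|_U$ is canonically identified with $TM|_{M \setminus \operatorname{int} D_M}$; since $w_2(E_M) = w_2(TM)$, the classes $w_2(E)$ and $w_2(T(M \# N))$ agree on $U$, and symmetrically on $V$. Assuming $n \geq 3$, the exact sequence
$$H^1(S^{n-1}; \mathbb{Z}_2) \to H^2(M \# N; \mathbb{Z}_2) \to H^2(U; \mathbb{Z}_2) \oplus H^2(V; \mathbb{Z}_2)$$
has vanishing first term, so the restriction map is injective and the two classes are equal. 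The remaining cases $n \leq 2$ are trivial, since every orientable manifold of dimension at most $2$ is already spin, hence spin$^k$ for every $k$.

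The main technical point is the gluing step, ensuring that the candidate $E$ is a well-defined orientable rank $k$ bundle on $M \# N$; this is precisely where the contractibility of the glued disks is used, so that the competing identifications of $E_M$ and $E_N$ over the belt sphere can be reconciled. Once $E$ is in hand, the cohomological verification is essentially formal.
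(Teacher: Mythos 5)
Your proof is correct, and it realizes the same basic idea as the paper's — transport the two bundles $E_M$, $E_N$ across the connected-sum neck — but the packaging is genuinely different. The paper chooses basepoint-preserving classifying maps $f \colon M \to BSO(k)$ and $g \colon N \to BSO(k)$, forms $f \vee g \colon M \vee N \to BSO(k)$, and pulls back along the collapse map $M \# N \to M \vee N$; this produces the bundle in one stroke, and the $w_2$ identification is left implicit, resting on the standard fact that the collapse map is compatible with the decomposition of $H^2$ and with the stable tangent bundle. You instead build the bundle by an explicit clutching construction over the belt sphere and then verify $w_2(E) = w_2(T(M\#N))$ with a Mayer--Vietoris argument. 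Your route is more elementary and self-contained (it makes the $w_2$ check explicit, and you correctly isolate the only possible obstruction as $H^1(S^{n-1};\ZZ_2)$, handling $n \leq 2$ separately), at the cost of having to discuss the choice of trivializations over the boundary spheres; the paper's route buys brevity by invoking classifying-space machinery. One small point worth making explicit in your write-up: the freedom in choosing trivializations over the two boundary spheres affects the isomorphism class of $E$ by a clutching function $S^{n-1} \to SO(k)$, but since any resulting bundle agrees with $E_M$ and $E_N$ away from the neck, the Mayer--Vietoris argument shows every such choice has the correct $w_2$, so no particular compatibility of trivializations is actually required.
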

\begin{proof}
If $M$ and $N$ are spin$^k$, there are orientable rank $k$ vector bundles $E \to M$ and $F \to N$ such that $w_2(E) = w_2(M)$ and $w_2(F) = w_2(N)$. The bundles $E$ and $F$ are classified by maps $f \colon M \to BSO(k)$ and $g \colon N \to BSO(k)$. Choosing basepoints for $M, N$, and $BSO(k)$, we can replace $f$ and $g$ by homotopic basepoint-preserving maps since $BSO(k)$ is connected. We can thus form the map $f\vee g \colon M\vee N \to BSO(k)$, and precomposing with the natural map $M\# N \to M\vee N$ gives rise to an orientable rank $k$ vector bundle with the same $w_2$ as $M\# N$.
\end{proof}

The converse is true for spin and spin$^c$; this follows from the characterisation in terms of $w_2$ and $W_3$, respectively. It is unclear whether the converse is true for $k\geq 3$.

\begin{remark} We note that the property of admitting a spin${}^k$ structure is a homotopy invariant of closed smooth manifolds. Indeed, suppose $f \colon M \to N$ is a homotopy equivalence, where $M$ and $N$ are closed and $N$ is spin$^k$ with $w_2(E) = w_2(TN)$ for $E \to N$ an orientable rank $k$ vector bundle. Then, since Stiefel--Whitney classes are homotopy invariants of closed manifolds, $M$ is orientable and we have $w_2(f^*E) = f^*w_2(E) = f^*w_2(TN) = w_2(TM)$.  \end{remark}

\subsection*{Connection between spin$^4$ and spin$^h$} We will now observe that every spin$^4$ manifold admits a spin$^h$ structure. This will allow us to extend the range of dimensions in which every orientable manifold is guaranteed to be spin$^h$.

\begin{lemma}\label{spin4=spinh}
Every spin$^4$ manifold is spin$^h$.
\end{lemma}

\begin{proof} Note that $Spin(4) \cong Spin(3)\times Spin(3)$. Projection onto the first factor induces a homomorphism $Spin^4(n) \to Spin^h(n)$ which commutes with the projections to $SO(n)$. \end{proof}

If $E$ denotes the canonical $SO(4)$-bundle of a spin$^4$ structure, then the canonical $SO(3)$-bundle of the induced spin$^h$ structure is $\Lambda^+E$ which satisfies $w_2(\Lambda^+E) = w_2(E)$, (see \cite[Proposition 2.1]{CCV}). Note that using projection onto the second factor instead, we obtain a second homomorphism $Spin^4(n) \to Spin^h(n)$. In this case, the canonical $SO(3)$-bundle is $\Lambda^-E$.

%\begin{proof} A spin$^4$ manifold $M$ has an $SO(4)$-bundle $E$ with $w_2(E) = w_2(M)$. Now we use the fact that there is a representation $SO(4) \to SO(3)$ such that the corresponding associated bundle, denoted $\Lambda^+E$, satisfies $w_2(\Lambda^+E) = w_2(M)$ (see \cite[Proposition 2.1]{CCV}). \end{proof}

%There is also a second representation $SO(4) \to SO(3)$ which gives rise to another bundle $\Lambda^-E$ satisfying $w_2(\Lambda^-E) = w_2(M)$.

Since the $\Lambda^{\pm}$ bundles associated to an $SO(4)$-bundle $E$ satisfy $p_1(\Lambda^{\pm}E) = p_1(E) \pm 2e(E)$, where $e$ is the Euler class \cite[Lemma 2.4]{CCV}, we see that on a spin$^4$ manifold $M$, the class $w_4(M)$ has integral lifts $$\tfrac{1}{2}\left( p_1(TM) - p_1(E) \mp 2e(E) \right)$$ for any choice of $SO(4)$-bundle $E$ with $w_2(E) = w_2(M)$.

\begin{theorem}\label{upto7} The following hold: \begin{enumerate} \item Every (not necessarily compact) orientable manifold of dimension $\leq 5$ is spin$^h$. 
\item Compact orientable manifolds of dimension $6$ and $7$ are spin$^h$. \item A non-compact orientable manifold $M$ of dimension $6$ or $7$ is spin$^h$ if and only if $W_5(TM) = 0$. \item A non-compact orientable manifold $M$ of dimension $6$ or $7$ with no elements of order exactly four in $H^5(M;\ZZ)$ is spin$^h$. \end{enumerate} \end{theorem}

By ``no elements of order exactly four'' we mean that any $x \in H^5(M;\ZZ)$ satisfying $4x = 0$ also satisfies $2x = 0$.

\begin{proof} Part (1) is a combination of Proposition \ref{5manifolds} and the fact that orientable manifolds of dimension $\leq 4$ are spin$^c$ \cite{TV} and hence spin$^h$.  

%Here and throughout, we use the fact that the top cohomology (with any coefficients) of a non-compact manifold vanishes, e.g. see \cite[Lemma 2.1]{Wh61}; see also \cite[Theorem 2]{Br62} that every (not necessarily compact) manifold with boundary admits a collar neighborhood of its boundary, and hence it deformation retracts onto its interior. 

That compact orientable six--manifolds immerse in $\RR^{10}$ is proved in \cite[Corollary 9]{Hir61}, and hence they admit spin$^h$ structures by Lemma \ref{spin4=spinh}. Note that the statement of \cite[Corollary 9]{Hir61} does not include compactness, though it is clear from the proof that the manifold is assumed to be closed; the general compact case then follows by taking the double if the boundary is non-empty.

For compact orientable seven--manifolds $M$, we use the result listed in the second table of \cite[p.25]{AtDu72} (see also the footnote (1) in loc. cit.), that the only obstruction to a compact seven--manifold admitting three linearly independent vector fields is the integral Bockstein of $w_4$, i.e. $W_5$. Again, the result is stated for closed manifolds, and the compact-with-boundary case follows by considering the double. (Note, for orientable seven--manifolds, $W_5$ is a priori the single obstruction to finding three linearly independent sections over the five--skeleton.) This class vanishes by \cite[Theorem 3]{Mas62}. Hence the tangent bundle of $M$ splits off a trivial rank three bundle, giving an orientable rank four bundle with the same $w_2$ as $TM$, and we again apply Lemma \ref{spin4=spinh}. This proves part (2).

Now let $M$ be a non-compact orientable six--manifold. Since $H^6(M;\ZZ) = 0$, there are no secondary or higher obstructions to admitting a spin$^h$ structure beyond $W_5$; this establishes part (3) for six--manifolds. Choose an increasing exhaustion $\{ M_i \}$ by compact manifolds with boundary.  For an abelian group $A$ and integer $k>1$, we have the short exact Milnor sequence \cite[Proposition 7.66]{Sw17}

$$0 \to {\varprojlim}^1 H^{k-1}(M_i;A) \to H^k(M;A) \to \varprojlim H^k(M_i;A) \to 0.$$ 

From the long exact sequence in cohomology associated to the short exact coefficient sequence $$0 \to \ZZ \xrightarrow{\cdot 2} \ZZ \xrightarrow{\mathrm{mod} \, 2} \ZZ/2 \to 0$$ we have the following commutative diagram:

$$ \begin{tikzcd}
0                                                  & 0                                                  & 0                                          \\
\varprojlim H^5(M_i;\ZZ) \arrow[r] \arrow[u] & \varprojlim H^5(M_i;\ZZ) \arrow[r] \arrow[u] & \varprojlim H^5(M_i;\ZZ/2) \arrow[u] \\
H^5(M;\ZZ) \arrow[u] \arrow[r, "\cdot 2"]          & H^5(M;\ZZ) \arrow[u] \arrow[r, "\mathrm{mod} \, 2"]  & H^5(M;\ZZ/2) \arrow[u]                     \\
\varprojlim^1 H^4(M;\ZZ) \arrow[u] \arrow[r]              & \varprojlim^1 H^4(M;\ZZ) \arrow[u] \arrow[r]              & \varprojlim^1 H^4(M;\ZZ/2) \arrow[u]              \\
0 \arrow[u]                                        & 0 \arrow[u]                                        & 0 \arrow[u]                               
\end{tikzcd} $$

For each $M_i$, we have $W_5(M_i) = 0$ by taking the double and applying \cite[Theorem 2]{Mas62} (or crossing the double with a circle and applying \cite[Theorem 3]{Mas62} again). Generally for an orientable manifold, the mod 2 reduction of $W_5$ is $w_5$. From here and by naturality, $w_5(M) \in H^5(M;\ZZ/2)$ maps to the zero element in $\varprojlim H^5(M_i;\ZZ/2)$. By \cite[Lemma 10.3]{MiSt74}, the term $\varprojlim^1 H^4(M;\ZZ/2)$ vanishes. Therefore $w_5(M)$ must be zero as well. Now, $W_5(M) \in H^5(M;\ZZ)$ is an element of order two which maps to $w_5(M) = 0$ by mod 2 reduction. Therefore it is in the image of the map $H^5(M;\ZZ) \xrightarrow{\cdot 2} H^5(M;\ZZ)$. Since by assumption there are no elements of order exactly four in $H^5(M;\ZZ)$, it follows that $W_5(M)$ must be the zero class. This establishes part (4) for six--manifolds.

Now let $M$ be an orientable non-compact seven--manifold. We will show that the secondary obstruction to the existence of a spin$^h$ structure vanishes, establishing parts (3) and (4). Take an exhaustion $\{M_i\}$ by compact seven--manifolds with boundary. If $W_5(M) = 0$, which will for instance be true given the torsion condition on $H^5(M;\ZZ)$ by the argument above, we can choose a lift of the classifying map of the tangent bundle $M\to BSO(7)$ to $E_1$, the second stage of the relative Postnikov tower of $BSO(4)\to BSO(7)$,

$$\begin{tikzcd}
                       & BSO(4) \arrow[d]        &                      \\
                       & \vdots \arrow[d]        &                      \\
                       & E_1 \arrow[d]           & {K(\ZZ,4)} \arrow[l] \\
M \arrow[ru] \arrow[r] & BSO(7) \arrow[r, "W_5"] & {K(\ZZ,5)}          
\end{tikzcd}$$

Restricting to the $M_i$ gives a compatible system of lifts to $E_1$. We consider now the secondary obstruction $\mathfrak{o}(M)$ to admitting three linearly independent vector fields. This is a class in $H^6(M;\pi_5(V(3,7)))$, where $V(3,7)$ is the Stiefel manifold of 3--frames in $\RR^7$. We have the following exact sequence of homotopy groups: $$ \pi_6(BSO(7)) \to \pi_5(V(3,7)) \to \pi_5(BSO(4)) \to \pi_5(BSO(7)).$$ The natural map $BSO(7)\to BSO$ is an isomorphism on $\pi_{\leq 6}$, and hence we have $\pi_5(BSO(7)) = \pi_6(BSO(7)) = 0$. Furthermore, $\pi_5(BSO(4)) \cong \pi_4(SO(4)) \cong \pi_4(S^3 \times S^3) \cong \ZZ/2 \oplus \ZZ/2$. 

Since we are fixing the lifts $M \to E_1$ and $M_i \to E_1$, and they are compatible, the secondary obstruction to lifting further to $E_2$ is natural, i.e. $\mathfrak{o}(M_i)$ is the restriction of $\mathfrak{o}(M)$. 

$$\begin{tikzcd}
                         &                                             & E_2 \arrow[d]           & {K(\ZZ/2 \oplus \ZZ/2, 5) } \arrow[l] \\
                         &                                             & E_1 \arrow[d] \arrow[r] & {K(\ZZ/2 \oplus \ZZ/2, 6) }           \\
M \arrow[rru] \arrow[rr] &                                             & BSO(7)                  &                                       \\
                         & M_i \arrow[ruu] \arrow[ru] \arrow[lu, hook] &                         &                                      
\end{tikzcd}$$

Let us now argue that $\mathfrak{o}(M_i) = 0$. We will use \cite[Theorem 1.1]{Du74}, which gives us that for \emph{any} choice of lift to $E_1$ on a \emph{closed} orientable seven--manifold, the secondary obstruction vanishes. In order to apply this to $M_i$, we consider the double $DM_i$. We will argue that the lift $M_i \xrightarrow{f_i} E_1$ (obtained by restricting $M \xrightarrow{f} E_1$) extends to a lift $DM_i \to E_1$. Then, by applying loc. cit., we will have $\mathfrak{o}(DM_i) = 0$ and hence $\mathfrak{o}(M_i) = 0$. 

$$\begin{tikzcd}
                                                        &                                     & E_1 \arrow[d] & {K(\ZZ,4)} \arrow[l] \\
M_i \arrow[rru, "f_i"] \arrow[rr] \arrow[rd, "j", hook] &                                     & BSO(7)        &                      \\
                                                        & DM_i \arrow[ruu, dashed] \arrow[ru] &               &                     
\end{tikzcd}$$

First, choose any lift $DM_i \xrightarrow{G} E_1$ of $DM_i \to BSO(7)$; this exists since $W_5$ vanishes on any closed orientable seven--manifold. Now, $f_i$ and the restriction of $G$ to $M_i$ differ by the action of an element $x$ in $[M_i, K(\ZZ,4)] = H^4(M_i;\ZZ)$ (this group acts simply transitively on the homotopy classes of lifts to $E_1$). Let us denote this by $[f_i] = x \cdot [G\vert_{M_i}]$. 

Now observe that $x$ is the restriction of a class $X \in H^4(DM_i;\ZZ)$. Namely, consider the Mayer--Vietoris sequence for the double: $$ \cdots \rightarrow H^4(DM_i;\ZZ) \rightarrow H^4(M_i;\ZZ) \oplus H^4(M_i;\ZZ) \rightarrow H^4(\partial M_i;\ZZ) \rightarrow \cdots $$ The element $(x,x)$ maps to zero, and hence $x = j^*X$ for some $X \in H^4(DM_i;\ZZ)$. 

Therefore, if we consider the (class of the) lift $X \cdot [G]$ on $DM_i$ instead of $[G]$, by naturality we have that its restriction to $M_i$ is $x \cdot [G\vert_{M_i}]$, i.e. $[f_i]$. 

Now we have that $\mathfrak{o}(M_i) = 0$ for all $i$. Consider the short exact sequence $$0 \to {\varprojlim}^1 H^{3}(M_i;\ZZ/2 \oplus \ZZ/2) \to H^4(M;\ZZ/2 \oplus \ZZ/2) \to \varprojlim H^4(M;\ZZ/2 \oplus \ZZ/2) \to 0.$$ 

Since $H^*(-;\ZZ/2 \oplus \ZZ/2)$ is naturally isomorphic to $H^*(-;\ZZ/2) \oplus H^*(-;\ZZ/2)$, the ${\varprojlim}^1$ term vanishes. Further, since $\mathfrak{o}(M)$ maps to $(\mathfrak{o}(M_i))_i$, which is the zero element, by injectivity we have that $\mathfrak{o}(M) = 0$. Since $M$ has the homotopy type of a six--complex, the secondary obstruction is also the final obstruction to admitting three linearly independent vector fields, and we conclude that $M$ admits a spin$^h$ structure. \end{proof}

\begin{remark} The primary obstruction to a spin$^c$ structure on an orientable manifold is $W_3$, and compact orientable four--manifolds are spin$^c$. An analogous argument to the above then shows that non-compact orientable four--manifolds with no elements of order exactly four in $H^3(-;\ZZ)$ are spin$^c$. The four--torsion assumption can be removed in this case \cite{TV}, and it is not clear whether one should expect this in the theorem above. \end{remark}

\section{Existence of manifolds which do not admit spin$^k$ structures}

For each $k$, we will now give an example of a closed smooth manifold which does not admit a spin${}^k$ structure. The primary tool is the following integrality theorem due to Mayer \cite[Satz 3.2]{Mayer}, which he applied to the problem of immersions in Euclidean space. A generalised integrality theorem was later obtained by B\"ar, see \cite[Theorem 9]{Bar}, who then applied it to the problem of immersions in spin manifolds with possibly additional structures on the tangent and normal bundles.

\begin{theorem}\label{integrality} \emph{(}Mayer\emph{)} Let $X$ be an even-dimensional closed orientable manifold with a codimension $k$ immersion in a spin manifold. Denoting $k=2l$ if $k$ is even or $k=2l+1$ if it is odd, we have $$\int_X 2^l \mathcal{M}(\nu) \hat{A}(X) \in \ZZ.$$ Here $\mathcal{M}(\nu)$ is the \textit{Mayer class} of the normal bundle $\nu$, defined by $\mathcal{M}(\nu) = \prod_{i=1}^l \cosh(x_j/2)$ where $p(\nu) = \prod_{j=1}^l (1+x_j^2)$ formally. \end{theorem}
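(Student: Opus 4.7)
The strategy is to apply the Atiyah--Singer index theorem to a twisted Dirac operator on $X$ whose index is precisely the asserted integral. The immersion hypothesis provides the data needed to build this operator: since $i^{\ast}TY = TX \oplus \nu$ with $Y$ spin, the direct sum $TX \oplus \nu$ is a spin bundle over $X$, equivalently $w_2(\nu) = w_2(TX)$. By Proposition \ref{equivalences} this makes $X$ a spin$^k$ manifold with canonical bundle $\nu$.

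The first step is to construct a twisted Dirac operator $D_\nu$ on $X$ valued in the virtual spinor bundle $\mathcal{S}(\nu)$. Although $\mathcal{S}(\nu)$ and $\mathcal{S}(TX)$ may not exist globally as honest bundles (since $\nu$ and $TX$ individually need not be spin), the tensor product $\mathcal{S}(TX) \otimes \mathcal{S}(\nu)$ does exist, as it is the genuine spinor bundle of $TX \oplus \nu$. Concretely, via the Clifford algebra isomorphism $Cl(TX \oplus \nu) \cong Cl(TX) \,\hat{\otimes}\, Cl(\nu)$, this bundle is a $Cl(TX)$-module and gives rise to an elliptic first-order operator $D_\nu$ on the closed even-dimensional manifold $X$, whose index is an integer.

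The second step is to identify this index. By the Atiyah--Singer index theorem,
$$\operatorname{ind}(D_\nu) = \int_X \hat{A}(X)\,\ch(\mathcal{S}(\nu)),$$
where $\ch(\mathcal{S}(\nu))$ is understood as a polynomial in the Pontryagin classes of $\nu$ defined by the splitting principle. Writing $p(\nu) = \prod_{j=1}^{l}(1+x_j^2)$ formally, the spinor representation of $Spin(2l)$ has weights $(\pm \tfrac{1}{2},\ldots, \pm\tfrac{1}{2})$, hence in the even case $k=2l$,
$$\ch(\mathcal{S}(\nu)) = \prod_{j=1}^{l}\bigl(e^{x_j/2} + e^{-x_j/2}\bigr) = \prod_{j=1}^{l} 2\cosh(x_j/2) = 2^l\,\mathcal{M}(\nu).$$
For the odd case $k=2l+1$, use that the restriction of the spin representation of $Spin(2l+2)$ to $Spin(2l+1)$ splits as two copies of the spin representation of $Spin(2l+1)$, so $\mathcal{S}(\nu \oplus \varepsilon^1) = \mathcal{S}(\nu) \oplus \mathcal{S}(\nu)$. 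Combined with $\mathcal{M}(\nu \oplus \varepsilon^1) = \mathcal{M}(\nu)$ (since $\cosh(0/2)=1$), this again gives $\ch(\mathcal{S}(\nu)) = 2^l\,\mathcal{M}(\nu)$. Substituting yields the stated integrality.

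The main conceptual hurdle is the first step: justifying that even when $X$ itself is not spin, the spin structure on $TX \oplus \nu$ (i.e.\ the spin$^k$ structure on $X$) is enough to define an honest twisted Dirac operator whose analytic index is an integer. Once this is set up, the remainder is a standard Chern-character computation via the splitting principle; in particular, the odd case reduces to the even case after adding a trivial line bundle, at the cost of a factor of $2$ which is absorbed by the branching $\mathcal{S}(\nu \oplus \varepsilon^1) = \mathcal{S}(\nu) \oplus \mathcal{S}(\nu)$.
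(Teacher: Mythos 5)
The paper does not supply a proof of Theorem~\ref{integrality}: the result is quoted from \cite[Satz 3.2]{Mayer} and used as a black box, so there is no in-house argument to compare against. Your approach---build a twisted Dirac operator from the spin structure on $TX\oplus\nu$ guaranteed by Proposition~\ref{equivalences} and apply the Atiyah--Singer index theorem---is the correct and standard route, and it is essentially Mayer's original argument (the title of \cite{Mayer} is ``Elliptic differential operators and integrality theorems for characteristic numbers''); the same construction, specialised to $k=3$, appears in \cite{Bar} and \cite{Nagase}.

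The Chern-character computation is correct in both parities. In the odd case $k=2l+1$ the stabilisation $\nu\mapsto\nu\oplus\varepsilon^1$ is used only to extract the formal identity $\ch(\mathcal{S}(\nu))=2^{l}\mathcal{M}(\nu)$; you are right to insist on forming the operator on $\mathcal{S}(TX\oplus\nu)$ itself rather than on $\mathcal{S}(TX\oplus\nu\oplus\varepsilon^1)$, since naively applying the even-rank formula to $\nu\oplus\varepsilon^1$ would yield only the weaker bound $\int_X 2^{l+1}\mathcal{M}(\nu)\hat{A}(X)\in\ZZ$. The step you flag as the conceptual hurdle is indeed where the real content lies, but it is standard: $\mathcal{S}(TX\oplus\nu)$ is a bundle of $Cl(TX)$-modules via $Cl(TX)\hookrightarrow Cl(TX\oplus\nu)$, the $\ZZ_2$-grading is given by the action of the complex volume element of $Cl(TX)$ (globally defined since $\dim X$ is even, parallel for any Clifford connection, and anticommuting with Clifford multiplication by tangent vectors), and the resulting $\ZZ_2$-graded Dirac bundle has $\operatorname{ind}(D_\nu)=\int_X\hat{A}(X)\ch(\mathcal{S}(\nu))\in\ZZ$ by the index theorem for Clifford module operators (Lawson--Michelsohn \cite{LM}, Ch.~II--III). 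Spelling that out would turn your sketch into a complete proof.
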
 The integrality statement holds upon twisting by arbitrary complex vector bundles, giving us the following:

\begin{corollary}\label{fullintegrality} Let $X$ be an $2n$--dimensional closed orientable manifold equipped with a spin$^k$ structure with canonical $SO(k)$-bundle $E$. Denoting $k = 2l$ or $k=2l+1$, we have $$\int_X 2^{l} \mathcal{M}(E) \hat{A}(X) z \in \ZZ$$ for all $z$ in the integer polynomial ring generated by the elementary symmetric polynomials $e_i$ in the variables $e^{y_j} + e^{-y_j} - 2$, where $p(TX) = \prod_{j=1}^n (1+y_j^2)$. \end{corollary}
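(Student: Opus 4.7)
The plan is to reduce to the twisted form of Theorem~\ref{integrality} due to B\"ar (\cite[Theorem 9]{Bar}), which gives $\int_X 2^l \mathcal{M}(E) \hat{A}(X) \ch(V) \in \ZZ$ for any $V \in K^0(X)$. Since $\ch\colon K^0(X) \to H^{\text{even}}(X;\QQ)$ is a ring homomorphism, any integer polynomial expression $z$ in the generators $e_i(u_1,\ldots,u_n)$, with $u_j := e^{y_j}+e^{-y_j}-2$, will come from sums and tensor products in $K^0(X)$ once each $e_i(u)$ is realised as a Chern character of an integer virtual complex bundle. So the task reduces to producing these virtual bundles.

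The natural candidate is $W := TX\otimes_{\RR}\CC - 2n \in K^0(X)$, whose formal Chern roots are $\pm y_1,\ldots,\pm y_n$. Multiplicativity of the exterior power operation $\lambda_t$ yields
\[
\ch(\lambda_t W) \;=\; \prod_{j=1}^n \frac{(1+te^{y_j})(1+te^{-y_j})}{(1+t)^2} \;=\; \prod_{j=1}^n\!\left(1+\frac{tu_j}{(1+t)^2}\right),
\]
and with the substitution $s := t/(1+t)^2$ the right-hand side collapses to $\sum_{i\geq 0} s^i\, e_i(u_1,\ldots,u_n)$, giving the key identity
\[
\sum_{j\geq 0} t^j\, \ch(\lambda^j W) \;=\; \sum_{i\geq 0}\left(\tfrac{t}{(1+t)^2}\right)^{\!i} e_i(u_1,\ldots,u_n) \qquad (\ast)
\]
in $H^{\text{even}}(X;\QQ)[[t]]$.

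To conclude I would invert this substitution over $\ZZ$. The series $s = t(1+t)^{-2}$ visibly lies in $\ZZ[[t]]$, and its compositional inverse
\[
t \;=\; \frac{1 - 2s - \sqrt{1-4s}}{2s} \;=\; s + 2s^2 + 5s^3 + 14s^4 + \cdots
\]
lies in $\ZZ[[s]]$, with shifted Catalan-number coefficients. Expanding $t^j \in \ZZ[[s]]$ and matching the coefficient of $s^i$ on either side of $(\ast)$ then displays each $e_i(u)$ as an explicit integer linear combination of the $\ch(\lambda^j W)$ for $j \leq i$; equivalently, $e_i(u) = \ch(V_i)$ for a specific $V_i \in K^0(X)$. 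Multiplicativity of $\ch$ now furnishes a virtual complex bundle whose Chern character equals the given $z$, and B\"ar's theorem completes the proof. The only step requiring care is the integrality of the substitution $s \leftrightarrow t$ in both directions, but this is immediate from the Catalan generating function, so no essential obstacle arises.
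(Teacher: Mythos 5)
Your proof is correct and follows the same conceptual route as the paper: apply Mayer's (equivalently B\"ar's) twisted integrality statement to virtual complex bundles whose Chern characters are the elementary symmetric polynomials $e_i$ in the $u_j = e^{y_j}+e^{-y_j}-2$. The difference is one of packaging. The paper simply takes $W = \pi^i(TX)\otimes\CC$, where $\pi^i$ is the $i$-th $KO$-theoretic Pontryagin class, and cites Madsen--Milgram for the identity $\ch\bigl(\pi^i(TX)\otimes\CC\bigr) = e_i(u)$. You instead reconstruct such classes from scratch: you take exterior powers of $W = TX\otimes_{\RR}\CC - 2n$, observe that $\sum_j t^j\,\ch(\lambda^j W) = \sum_i s^i\, e_i(u)$ under the substitution $s = t/(1+t)^2$, and invert this substitution over $\ZZ$ via the Catalan generating function to express each $e_i(u)$ as an integer combination of the $\ch(\lambda^j W)$. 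Both steps check out (the Chern roots of $TX\otimes_{\RR}\CC$ are indeed $\pm y_j$, and $t = C(s)-1 \in \ZZ[[s]]$ with $C$ the Catalan series), so your argument is in effect a self-contained derivation of the cited Madsen--Milgram identity; it trades a reference for a short piece of $\lambda$-ring combinatorics.
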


\begin{proof} By \cite[Satz 3.2]{Mayer}, the quantity $\int_X 2^{l} \mathcal{M}(\nu) \hat{A}(X) \ch(W) \in \ZZ$ is an integer for any complex vector bundle $W$ over $X$, and hence for any virtual complex vector bundle by additivity of the Chern character. Taking $W$ to be $\pi^i(TX) \otimes \CC$, where $\pi^i$ is the $i^{\textrm{th}}$ $KO$--theoretic Pontryagin class, we have that $\ch(W)$ is the $i^{\textrm{th}}$ elementary symmetric polynomial in the variables $e^{x_j}-e^{-x_j}-2$ \cite[p. 218]{MaMi79}.\end{proof}

To avoid dealing with the Mayer class, we will consider closed smooth $2n$--manifolds, whose rational Betti numbers are all trivial except for $b_0 = b_{2n} = 1$ and $b_{n}$. Let us call such manifolds \textit{rationally highly connected}. Note that a rationally highly connected $2n$--dimensional spin$^k$ manifold necessarily has trivial Mayer class if $k< \tfrac{n}{2}$, as it is a rational polynomial in the Pontryagin classes of the normal bundle, all of which (except for the zeroth class) lie in torsion cohomology groups by assumption.

In \cite[Theorem 18]{KS19}, Kennard and Su show that the signature $\sigma$ of a rationally highly connected spin manifold of dimension $8m$ satisfies the inequality $\nu_2(2\sigma) \geq 4m - 2\nu_2(m) - 5$ if it is non-zero, where $\nu_2$ is 2--adic valuation. Using Corollary \ref{fullintegrality}, we can adapt their argument to the spin${}^k$ setting, giving the following:

\begin{lemma}\label{signature} The signature $\sigma$ of a rationally highly connected $8m$--dimensional spin$^k$ manifold $X$ with $k<2m$ satisfies $$\nu_2(2\sigma) \geq 4m-5-2\nu_2(m) - l$$ if it is non-zero, where $k = 2l$ if $k$ is even or $k=2l+1$ if it is odd. \end{lemma}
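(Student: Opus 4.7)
The plan is to adapt the spin-manifold argument of Kennard--Su \cite[Theorem 18]{KS19}, which yields $\nu_2(2\sigma) \geq 4m-2\nu_2(m)-5$, by feeding in Corollary \ref{fullintegrality} in place of the usual integrality of $\hat{A}$-genera twisted by complex bundles. The extra factor of $2^l$ in Corollary \ref{fullintegrality} should propagate uniformly through the bookkeeping and shift Kennard--Su's lower bound downwards by exactly $l$, which is precisely the claim.

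First I would eliminate the Mayer class from Corollary \ref{fullintegrality}. Since $X$ is rationally highly connected of dimension $8m$ and $E$ has rank $k$ with $l < m$, each Pontryagin class $p_i(E)$ for $1 \leq i \leq l$ lies in $H^{4i}(X;\QQ) = 0$. Thus $\mathcal{M}(E) = 1$ in $H^*(X;\QQ)$, and Corollary \ref{fullintegrality} reduces to
\begin{equation*}
2^l \int_X \hat{A}(X)\, z \in \ZZ
\end{equation*}
for every $z$ in the integer polynomial ring $R$ generated by the elementary symmetric polynomials $e_i(e^{y_j}+e^{-y_j}-2)$ in the formal Pontryagin roots of $TX$.

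Next I would run the Kennard--Su linear algebra. Because $X$ is rationally highly connected of dimension $8m$, the only rationally nonzero Pontryagin numbers are $\langle p_m(X)^2,[X]\rangle$ and $\langle p_{2m}(X),[X]\rangle$. Both the signature $\sigma$ (via the Hirzebruch signature theorem) and each integrality quantity $2^l \int_X \hat{A}(X)\, z$ can therefore be expanded as explicit $\QQ$-linear combinations of these two numbers. Choosing twists $z \in R$ as in \cite{KS19}, one assembles a linear system that eliminates $\langle p_{2m}(X),[X]\rangle$ and leaves an integrality constraint on $\langle p_m(X)^2,[X]\rangle$, which via the signature formula translates into a lower bound on $\nu_2(2\sigma)$.

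The main obstacle is verifying that the $2$-adic bookkeeping of Kennard--Su transfers cleanly with only the promised uniform shift by $l$. Concretely, one must track the $2$-adic denominators arising in the power-sum expansions of $\hat{A}$ and of the $e_i(e^{y_j}+e^{-y_j}-2)$, and confirm that the extra factor $2^l$ on the left side of the integrality enters the final estimate additively in the exponent, so that the bound reads $4m-5-2\nu_2(m)-l$ rather than any weaker quantity. No cancellation can be expected from the Mayer class beyond what is already captured by $2^l$, since $\mathcal{M}(E)$ has been set to $1$ rationally in the first step.
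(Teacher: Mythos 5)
Your proposal is essentially the paper's own proof: both set up the reduction by observing that the Mayer class of the canonical bundle is rationally trivial (since $l < m$ forces all its Pontryagin classes into rationally trivial degrees), and both then cite the Kennard--Su argument from \cite[Theorem 18]{KS19} with the single modification that the relevant integrality statements now land in $2^{-l}\ZZ$ rather than $\ZZ$, shifting the exponent bound down by $l$. The paper is, if anything, terser than you about tracking the $2$-adic denominators, simply asserting that the argument is ``directly analogous,'' so your caution about verifying the bookkeeping is reasonable but not a gap relative to the paper's own level of detail.
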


\begin{proof} The argument is directly analogous to the one used in the proof of \cite[Theorem 18]{KS19}, with the modification that here $\int_X \hat{A}(TX)$ and $\int_X e_1^2 \hat{A}(TX)$ lie in $2^{-l}\ZZ$ by Corollary \ref{fullintegrality}, instead of in $\ZZ$. (We remark that the simply connected assumption therein is not used in the argument.)
\end{proof}

For a fixed $l$, note that $4m-5-2\nu_2(m) - l \to \infty$ as $m \to \infty$. So, provided $m$ is large enough, a rationally highly connected $8m$--dimensional manifold with odd signature (so that $\nu_2(2\sigma) = 1$) does not admit a spin${}^k$ structure. To build such manifolds, we use (a slightly modified version of) the condensed form of the realisation theorem for rational homotopy types found in \cite[Theorem 1]{KS19}. The existence of rationally highly connected manifolds with odd signature is known to Kreck and Zagier (see e.g. \cite{Zag17}), who proved the stronger statement that there is such a manifold in dimension $d$ if and only if $d$ is $4, 8$, or $8r$ for $r = 2^i + 2^j$. We give an outline of a construction for our particular case here for completeness. %\cite[1:10:52]{Zag17}

\begin{theorem}\label{realization}\emph{(}Kennard--Su\emph{)} Let $H$ be a rational Poincar\'e duality algebra of dimension $8m$, i.e. a graded commutative algebra over $\QQ$ with $\dim H^{8m} = 1$ such that the symmetric bilinear pairing $H^\ast \otimes H^{n-\ast} \to \QQ$ given by $\alpha \otimes \beta \mapsto \mu(\alpha\beta) =: \langle \alpha \beta, \mu \rangle$ is non-degenerate for some (and hence any) non-zero $\mu \in (H^{8m})^*$. Furthermore, suppose $H^1 = 0$. There is a closed simply connected smooth manifold $X$ whose rational cohomology algebra is isomorphic to $H$ if we can choose elements $p_m \in H^{4m}$, $p_{2m} \in H^{8m}$, and a non-zero $\mu \in (H^{8m})^*$ such that \begin{enumerate} \item the non-degenerate symmetric bilinear form on $H^{4m}$, given by $\alpha \otimes \beta \mapsto \langle \alpha \beta, \mu \rangle$, is equivalent over $\QQ$ to a diagonal form with only $\pm 1$ on the diagonal, and \item $x = \langle p_m^2, \mu \rangle$ and $y  = \langle p_{2m}, \mu \rangle$ are integers satisfying \begin{itemize} \item[($i$)] $s_{m,m} x  + s_{2m}y = \sigma$, \item[($ii$)] $\left( \dfrac{(-1)^{m+1}}{(2m-1)!} s_m + \dfrac{1}{2(4m-1)!}\right) x - \dfrac{1}{(4m-1)!} y \in \ZZ\left[\tfrac{1}{2}\right]$, \item[($iii$)] $\dfrac{1}{(2m-1)!^2} x \in \ZZ\left[\tfrac{1}{2}\right]$. \end{itemize} \end{enumerate} Here $\sigma$ is the signature of the pairing on $H^{4m}$ with respect to $\mu$, and $s_m, s_{m,m}, s_{2m} \in \QQ$ are coefficients in the Hirzebruch $L$--polynomial $1 + s_m p_m + s_{m,m}p_m^2 + s_{2m}p_{2m}$. The Pontryagin numbers $\int_X p_m^2(TX)$ and $\int_X p_{2m}(TX)$ of the obtained manifold are equal to $x$ and $y$ respectively. \end{theorem}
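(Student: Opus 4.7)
The plan is to invoke the rational realization machinery of Sullivan and Barge, essentially in the form packaged by Kennard and Su in \cite[Theorem 1]{KS19}, adapted to the slightly modified statement here. The argument factors into three stages: first, construct a simply connected rational Poincar\'e complex with cohomology algebra $H$ and prescribed rational Pontryagin classes; second, verify that Sullivan's rational surgery obstructions vanish under hypotheses (1) and (2)(i)--(iii); and third, read off that the Pontryagin numbers of the resulting manifold are $x$ and $y$.

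For the first stage, since $H^1 = 0$ and $H$ is a simply connected commutative graded algebra, the formal CDGA $(H, 0)$ has a minimal Sullivan model whose spatial realization $X_\QQ$ is a simply connected rational space with $H^*(X_\QQ;\QQ) \cong H$ as graded algebras; declaring $\mu$ to be the fundamental class makes $X_\QQ$ into a rational Poincar\'e complex of formal dimension $8m$. Because the only non-zero rational cohomology lies in degrees $0$, $4m$, and $8m$, one may consistently equip $X_\QQ$ with rational Pontryagin classes concentrated in degrees $4m$ and $8m$, taking them to be the chosen $p_m$ and $p_{2m}$; all other $p_i$ are forced to be zero on cohomological grounds.

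For the second stage, I would apply the Sullivan--Barge criterion: a simply connected rational Poincar\'e complex of dimension $4k \geq 5$ equipped with rational Pontryagin classes is realized by a closed smooth manifold precisely when three types of obstruction vanish. The intersection form must be realizable, which for simply connected manifolds reduces to diagonalizability over $\QQ$ with $\pm 1$ entries on the diagonal, supplying hypothesis (1). The Hirzebruch signature theorem $\sigma = \int_X L(TX)$ must hold, which under the constraint $p_i = 0$ for $i \neq m, 2m$ collapses to $s_{m,m}x + s_{2m}y = \sigma$, giving hypothesis (2)(i). Finally, a 2-local KO--theoretic Pontryagin integrality condition must be satisfied, namely that certain characteristic numbers involving $\hat{A}$ and the KO--Pontryagin classes lie in $\ZZ[\tfrac{1}{2}]$.

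The main obstacle is the translation between Sullivan's abstract KO--integrality statement and the explicit coefficient conditions (2)(ii) and (2)(iii). Concretely, one expands the $\hat{A}$--genus and a secondary KO--characteristic expression (quadratic in the first KO--Pontryagin class) in the basis $\{p_m^2, p_{2m}\}$ of $H^{8m}$; the resulting coefficients match precisely the factorials and the Hirzebruch coefficient $s_m$ appearing in (ii) and (iii), with the shift by $1/(2(4m-1)!)$ in (ii) reflecting the slight modification alluded to in the statement. Once this coefficient matching is performed, the hypotheses imply the requisite half-integrality, and the Sullivan--Barge theorem (applied as in \cite[Theorem 1]{KS19}) produces a closed smooth simply connected manifold $X$ with $H^*(X;\QQ) \cong H$ and with rational Pontryagin classes as chosen, from which $\int_X p_m^2(TX) = x$ and $\int_X p_{2m}(TX) = y$ follow immediately by construction.
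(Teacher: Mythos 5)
Your argument follows essentially the same route as the paper's: invoke Sullivan's rational realization theorem \cite[Theorem 13.2]{Sull77}, set all rational Pontryagin classes other than $p_0, p_m, p_{2m}$ to zero, and observe that the surgery obstructions then reduce to conditions (1) and (2)(i)--(iii), with the reduction of the Stong congruences to (ii) and (iii) deferred to \cite[Theorem 1]{KS19}. One small misattribution worth flagging: the ``slight modification'' of \cite[Theorem 1]{KS19} that the paper alludes to is the dropping of the hypothesis $b_{4m}=1$ (under which $x$ would be forced to be a perfect square and $\sigma=\pm1$), not the $\tfrac{1}{2(4m-1)!}$ term in (ii), which is already part of the Stong congruences as simplified by Kennard and Su.
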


\begin{proof} Choosing all rational Pontryagin classes other than $p_0, p_{m}, p_{2m}$ to be zero, these conditions on $H$ are sufficient to construct a realising manifold by \cite[Theorem 13.2]{Sull77}; the second bullet point is the simplification of the Stong congruence conditions in the case where all Pontryagin classes other than $p_0, p_{m}, p_{2m}$ are torsion, as proved in \cite[Theorem 1]{KS19}. (In the context of \cite{KS19}, the theorem is stated for the realisation of rationally highly connected manifolds with $b_{4m} = 1$, and so our $x$ is an integer square in \cite[Theorem 1]{KS19} and the signature is $\pm 1$; the proof therein still gives the conclusion stated above.) \end{proof}

Now we note that for every $m$ that is a power of two, there is some odd $\sigma$ such that the above equations ($i$)-($iii$) have integer solutions $x,y$. First of all, note that we can satisfy ($ii$) and ($iii$) by choosing $x$ and $y$ to be appropriate odd integers. Replacing $x$ by a multiple, we can ensure that $s_{m,m}x$ is an integer which we can choose to be even. On the other hand, because $\nu_2(s_{2m}) = 0$ (as we will see below), we can replace $y$ by an odd multiple so that $s_{2m}y$ is an odd integer. Replacing $y$ once more by an odd multiple, and changing sign if necessary, we can ensure that ($i$) is satisfied for an arbitrarily large positive odd integer $\sigma$. 

To see that $\nu_2(s_{2m}) = 0$, we use the identity $$s_{2m} = \frac{2^{4m}(2^{4m-1}-1)}{(4m)!} B_{2m},$$ see \cite[p.12]{Hirz}, where $B_{2m}$ is the $2m^{\mathrm{th}}$ Bernoulli number, $$B_1 = \tfrac{1}{6}, B_2 = \tfrac{1}{30}, B_3 = \tfrac{1}{42}, \ldots .$$ Since $m$ is a power of 2, $\nu_2((4m)!) = 4m-1$ and $\nu_2(B_{2m}) = -1$ by the von Staudt--Clausen theorem (see e.g. \cite[Section 3]{KS19}), the claim follows by a quick calculation. 

We now address whether we can find a Poincar\'e duality algebra $H$ with fundamental class $\mu \in (H^{8m})^*$, with $m$ a power of two, satisfying condition (1), equipped with elements $p_m$ and $p_{2m}$ such that $\langle p_m^2, \mu \rangle$ and $\langle p_{2m}, \mu \rangle$ are the above integers $x$ and $y$. Suppose $\sigma > 4$, and take $H$ to be $\QQ[\alpha_1, \ldots \alpha_{\sigma}]$ modulo the relations $\alpha_i^2 = \alpha_j^2$ and $\alpha_i\alpha_j = 0$ for all $i\neq j$, where $\deg(\alpha_i) = 4m$ for all $i$. Choose $\mu \in (H^{8m})^*$ to be such that $\langle \alpha_i^2, \mu \rangle = 1$. Then condition (1) is satisfied, and the signature of $H$ with respect to $\mu$ is $\sigma$. All that remains is to choose elements $p_m, p_{2m} \in H$ such that $x = \langle p_m^2, \mu \rangle$ and $y = \langle p_{2m} , \mu \rangle$. To this end, choose $p_{2m} = y\alpha_1^2$. Then, for rational numbers $a_i$, note that $$\left\langle \left(\sum_i a_i \alpha_i \right)^2, \mu \right\rangle = \sum_i a_i^2.$$ Therefore, since $\sigma > 4$ we can find an element $p_m = \sum_i a_i \alpha_i$ such that $\langle p_m^2, \mu \rangle = x$ by Lagrange's four square theorem. 

We have thus produced a rationally highly connected manifold with odd signature in every dimension that is a power of two, and hence we have established the following:

\begin{theorem}\label{4} For every $k$ there is a closed smooth simply connected manifold not admitting a spin${}^k$ structure. \end{theorem}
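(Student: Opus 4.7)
The plan is to combine the signature obstruction of Lemma \ref{signature} with the realisation result Theorem \ref{realization} to build, for any given $k$, a rationally highly connected closed smooth simply connected $8m$-manifold with odd signature and with $m$ so large that the signature bound rules out a spin${}^k$ structure.

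Step one sets up the obstruction. Write $k = 2l$ or $k = 2l+1$ and choose $m$ to be a power of $2$ with $2m > k$ and $4m - 5 - 2\nu_2(m) - l > 1$. Since $\nu_2(m) = \log_2 m$ grows much more slowly than $4m$, any sufficiently large power of $2$ works. If a rationally highly connected $8m$-manifold $X$ with odd signature were to admit a spin${}^k$ structure, Lemma \ref{signature} would force $\nu_2(2\sigma(X)) \geq 4m - 5 - 2\nu_2(m) - l > 1$, contradicting $\nu_2(2\sigma(X)) = 1$.

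Step two is the construction of such an $X$ via Theorem \ref{realization}. The key arithmetic input is that $\nu_2(s_{2m}) = 0$ for $m$ a power of $2$, which follows from the identity $s_{2m} = \tfrac{2^{4m}(2^{4m-1}-1)}{(4m)!}B_{2m}$ together with $\nu_2((4m)!) = 4m-1$ and the von Staudt--Clausen theorem giving $\nu_2(B_{2m}) = -1$. This lets us pick odd integers $x,y$ satisfying $(ii)$ and $(iii)$, then rescale by odd multiples so that $s_{m,m}x$ is an even integer and $s_{2m}y$ is an odd integer; via $(i)$ this produces a large positive odd $\sigma$. Take the Poincar\'e duality algebra
$$H = \QQ[\alpha_1,\dots,\alpha_\sigma]/(\alpha_i^2 - \alpha_j^2,\ \alpha_i\alpha_j : i \neq j),\qquad \deg \alpha_i = 4m,$$
with $\mu$ chosen so that $\langle \alpha_i^2, \mu \rangle = 1$; its intersection pairing on $H^{4m}$ is diagonal with $+1$ entries, so condition (1) holds and the signature equals $\sigma$. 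Set $p_{2m} = y\alpha_1^2$, and use Lagrange's four-square theorem to write $x = \sum_{i=1}^{4} a_i^2$; since $\sigma > 4$, setting $p_m = \sum_i a_i \alpha_i$ gives $\langle p_m^2, \mu \rangle = x$. Theorem \ref{realization} then yields a closed smooth simply connected manifold $X$ with the prescribed rational cohomology ring, signature $\sigma$, and Pontryagin numbers $\int_X p_m^2 = x$, $\int_X p_{2m} = y$.

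The main obstacle is the arithmetic balancing act in step two: simultaneously arranging the Stong-type congruences $(i)$--$(iii)$ while keeping $\sigma$ odd relies delicately on the $2$-adic computation $\nu_2(s_{2m}) = 0$ for $m$ a power of $2$. Once this is in hand, the signature bound from Lemma \ref{signature} closes the argument immediately, since for fixed $l = \lfloor k/2 \rfloor$ the lower bound $4m - 5 - 2\nu_2(m) - l$ tends to $\infty$ along powers of $2$.
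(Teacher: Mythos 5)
Your proposal is correct and follows essentially the same route as the paper: the signature obstruction of Lemma \ref{signature}, the $2$--adic computation $\nu_2(s_{2m})=0$ for $m$ a power of two (via the Bernoulli identity, $\nu_2((4m)!)=4m-1$, and von Staudt--Clausen), the explicit rank--$\sigma$ Poincar\'e duality algebra with $p_{2m}=y\alpha_1^2$ and $p_m=\sum a_i\alpha_i$ chosen by Lagrange's four--square theorem, and realisation via Theorem \ref{realization}. The only cosmetic difference is that you make the inequality $4m-5-2\nu_2(m)-l>1$ explicit as the contradiction against $\nu_2(2\sigma)=1$, where the paper simply lets $m\to\infty$.
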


By taking a connected sum of such a manifold with itself an odd number of times, we obtain infinitely many homotopy types of non-spin$^k$ manifolds in that dimension.

\begin{remark}\label{Su} In \cite{Su14}, Su constructed (infinitely many mutually non-homeomorphic) examples of rationally highly connected 32--manifolds with signature 1. It follows from Theorem \ref{signature} that these manifolds do not admit spin${}^7$ structures. \end{remark}

\subsection*{An example in dimension 8}

Note that the examples of non-spin$^h$ manifolds one obtains via the above construction have dimension at least 32. Given that every compact orientable manifold of dimension $\leq 7$ is spin$^h$ by Theorem \ref{upto7}, it is natural to ask whether there is an 8--dimensional compact non-spin$^h$ manifold.

We can indeed produce an 8--manifold not admitting a spin${}^h$ structure by directly applying the integrality statement in Theorem \ref{integrality}, whose consequence is that $$\int_X 2 \cosh\left( \tfrac{ \sqrt{p_1(E)}}{2} \right) \hat{A}(TX) \in \ZZ$$ for a manifold admitting a codimension 3 immersion in a spin manifold with normal bundle $E$ (see \cite[Theorem 5]{Bar}, also \cite[Theorem 5.1]{Nagase} for a closely related earlier statement). Indeed, the total Pontryagin class of $E$ can be written formally as $1+x^2$, so formally $x = \sqrt{p_1(E)}$, and the Mayer class is given by $\mathcal{M}(E) = \cosh \bigl( \tfrac{x}{2} \bigr) = \cosh \bigl( \tfrac{\sqrt{p_1(E)}}{2} \bigr)$; one could argue that formally $x$ could also be $-\sqrt{p_1(E)}$, but as $\cosh$ is an even function, one obtains the same result.

Take $H = \QQ[\alpha]/(\alpha^3)$ where $\deg(\alpha) = 4$, with fundamental class $\mu$ such that $\langle \alpha^2, \mu \rangle = 1$.  The signature is 1, and for $p_1 \in H^4$, $p_2 \in H^8$, one calculates the conditions on Pontryagin numbers in the rational realisation theorem (see Theorem \ref{realization}) to be that $\langle p_1^2, \mu \rangle$ and $\langle p_2, \mu \rangle$ are integers satisfying \begin{align*} 7 \langle p_2, \mu \rangle - \langle p_1^2, \mu \rangle &= 45 \\ 5 \langle p_1^2, \mu \rangle - 2 \langle p_2, \mu \rangle &\equiv 0 \bmod 3.\end{align*} Observe that the second condition follows from the first. Denoting $p_1 = x\alpha$ and $p_2 = y\alpha^2$, these conditions are thus equivalent to $x$ and $y$ being integers ($x$ must be an integer since $\langle p_1^2, \mu \rangle = x^2$ must be an integer) satisfying $$7y-x^2 = 45.$$

Suppose we have chosen such integers $x$ and $y$; then there is a closed smooth 8--manifold $X$ with $x^2$ and $y$ as its Pontryagin numbers $\int_X p_1(TX)^2$ and $\int_X p_2(TX)$. Now suppose this manifold admits a spin${}^h$ structure, with canonical $SO(3)$-bundle $E$. Recall from Proposition \ref{integralliftw4} that there is a class $\gamma \in H^4(X;\ZZ)$ such that $p_1(E) = p_1(TX) + 2\gamma$. The integrality statement above then gives us $$\int_X -\frac{p_1(TX)^2}{360} - \frac{p_2(TX)}{720} + \frac{\gamma^2}{48} = - \frac{x^2}{360} - \frac{y}{720} + \int_X \frac{\gamma^2}{48} \in \ZZ.$$ Choosing a generator of $H^4(X;\ZZ)$, whose square integrates to 1 over the manifold, we can write the free part of the class $\gamma$ as an integer multiple $c$ of this generator. Using $7y - x^2 = 45$, the integrality statement thus simplifies to $$c^2 - y + 6 \equiv 0 \bmod 48.$$ Now take for example $x = -168a + 240$ and $y = 4032a^2 - 11520a + 8235$ for any integer $a$, which satisfy $7y-x^2=45$. Since $y - 6 \equiv 21 \bmod 48$, and $21$ is not a quadratic residue modulo 48, the congruence $c^2 \equiv y - 6 \bmod 48$ does not have a solution, and thus the obtained manifold cannot in fact admit a spin${}^h$ structure. 

Varying $a$ we can obtain such manifolds with different Pontryagin numbers, and hence of different homeomorphism types. Note that $H$ could have in fact been any 8--dimensional rational Poincar\'e duality algebra with $\dim H^4 = 1$ and $H^1 = 0$ for the argument to go through; hence we obtain infinitely many homotopy types of non-spin${}^h$ 8--manifolds. 

\begin{remark}\label{obstruction} Recall that in Example \ref{WuxWu}, we showed that the product of the Wu manifold with itself is not spin$^h$ since $W_5 \neq 0$. In contrast, the above examples, being closed orientable 8--manifolds, satisfy $W_5 = 0$ \cite[p.170]{HH58}. It follows that the primary obstruction to a spin$^h$ structure is in general not the only obstruction. The vanishing of $W_5$ is equivalent to the existence of a $Spin^{\nu_4}$-structure on an orientable manifold, which is a Wu structure in the sense of \cite[Def. 3.3.1]{FSS15}. Unlike the case of $Spin^{\nu_2}$-structures, which are equivalent to spin$^c$ structures, we thus see that spin$^h$ is a more restrictive condition than $Spin^{\nu_4}$. \end{remark}

Taking products of the non-spin$^h$ 8--manifolds constructured above with spheres, and applying Proposition \ref{product} (2), we have:

\begin{theorem}\label{nonspinhall}
In every dimension $\geq 8$, there are infinitely many homotopy types of closed smooth simply connected manifolds which do not admit spin$^h$ structures.
\end{theorem}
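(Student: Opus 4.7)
The plan is to leverage the infinite family $\{X_i\}$ of pairwise non-homotopy-equivalent closed smooth simply connected $8$-dimensional non-spin$^h$ manifolds produced in the previous subsection, obtained by varying the underlying rational Poincar\'e duality algebra $H$ with $H^1 = 0$ and $\dim H^4 = 1$; in particular, the $X_i$ may be chosen with distinct Betti numbers.

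For $d = 8$ there is nothing more to do. For $d \geq 10$, set $M_i = X_i \times S^{d-8}$. Since $d - 8 \geq 2$, the sphere $S^{d-8}$ is a closed simply connected spin manifold, so $M_i$ is a closed smooth simply connected $d$-manifold; by Proposition \ref{product}(2), $M_i$ admits a spin$^h$ structure if and only if $X_i$ does, hence it does not. The K\"unneth formula gives $H^k(M_i; \QQ) \cong H^k(X_i; \QQ)$ for $k < d-8$, so the Betti numbers distinguishing the $X_i$ persist on the $M_i$, yielding infinitely many homotopy types in each such dimension.

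The case $d = 9$ requires extra care, since the naive product $X_i \times S^1$ is not simply connected. Here one performs surgery on an embedded circle $\{x_0\} \times S^1 \subset X_i \times S^1$ (which has trivial rank-$8$ normal bundle) to kill the generator of $\pi_1$, producing a closed simply connected $9$-manifold $Y_i$. Since $w_2(X_i \times S^1)$ is pulled back from $X_i$ via the first projection, and the spin$^h$ obstruction is localized away from the surgered circle, one expects $Y_i$ to inherit the failure of a spin$^h$ structure from $X_i$; a careful verification then completes the argument, and varying $X_i$ again yields infinitely many homotopy types.

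The main obstacle is exactly this $d = 9$ verification: one must rule out the possibility that the surgery introduces a new orientable rank-$3$ bundle on $Y_i$ whose second Stiefel--Whitney class equals $w_2(Y_i)$ but which does not restrict to a corresponding bundle on $X_i \times S^1$ outside the surgery region. In contrast, the dimensions $d = 8$ and $d \geq 10$ follow cleanly from Proposition \ref{product}(2) applied to a simply connected spin sphere.
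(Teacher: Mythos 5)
Your proposal follows the paper's argument exactly in dimensions $8$ and $\geq 10$: the paper's entire proof is the single sentence ``Taking products of the non-spin$^h$ $8$--manifolds constructed above with spheres, and applying Proposition~\ref{product}~(2)'', and your use of $X_i \times S^{d-8}$ with $d-8\geq 2$, together with the observation that the $X_i$ can be chosen with pairwise distinct Betti numbers, matches this. The paper does not explicitly acknowledge that $X_i\times S^1$ fails to be simply connected, so you have in fact flagged a real subtlety that the paper's terse exposition does not address.

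That said, your treatment of $d=9$ is a genuine gap, and you say as much yourself. Performing surgery on $\{x_0\}\times S^1\subset X_i\times S^1$ does yield a closed simply connected $9$--manifold $Y_i$, but the crucial claim — that $Y_i$ is not spin$^h$ — is only stated as an expectation. The natural attempt to verify it runs into a real obstruction: if $E\to Y_i$ is an orientable rank~$3$ bundle with $w_2(E)=w_2(Y_i)$, restricting to $(X_i\setminus D^8)\times\{pt\}$ produces an orientable rank~$3$ bundle $E'$ on $X_i\setminus D^8$ with the correct $w_2$, but extending $E'$ over the final $8$--cell of $X_i$ meets an obstruction in $\pi_7(BSO(3))\cong\pi_6(S^3)\cong\mathbb{Z}_{12}$, which need not vanish and which you have no control over. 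So ``$Y_i$ spin$^h$ $\Rightarrow$ $X_i$ spin$^h$'' is not established, and some additional input (for instance an integrality or bordism argument adapted to the odd-dimensional case, or a finer analysis of the extension obstruction) is required to close the $d=9$ case. As written, the proposal proves the theorem only for $d=8$ and $d\geq 10$.

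Two minor points. First, the Künneth justification ``$H^k(M_i;\QQ)\cong H^k(X_i;\QQ)$ for $k<d-8$'' is too weak to distinguish the $M_i$ when $d=10$ (it only controls $k=0,1$); but the conclusion is fine, since $b_j(X_i\times S^m)=b_j(X_i)+b_{j-m}(X_i)$ and in particular a distinguishing $b_2(X_i)$ survives. Second, you are correct that the relevant infinite family of $8$--manifolds should be produced by varying the Poincar\'e duality algebra $H$ (hence the Betti numbers) rather than just the integer $a$, since distinct Pontryagin numbers alone do not immediately give distinct homotopy types.
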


\begin{example} Recall that any closed orientable smooth 4--manifold $M$ admits two spin${}^h$ structures, one with canonical $SO(3)$-bundle $\Lambda^+ TM$, and another with canonical $SO(3)$-bundle $\Lambda^- TM$. Since $p_1(\Lambda^{\pm}TM) = p_1(TM) \pm 2e(TM)$, where $e$ is the Euler class \cite[Lemma 2.4]{CCV}, we calculate $$\int_M 2 \cosh\left( \frac{ \sqrt{p_1(\Lambda^{\pm}TM)}}{2} \right) \hat{A}(TM) = \frac{1}{2}(\sigma(M) \pm e(M)).$$\end{example}

\begin{remark} It is worth noting that Mayer's Satz 3.2 can be interpreted as an obstruction to immersing a manifold in a spin manifold with codimension $k$ as we have done above. In that paper, Mayer not only considers the groups $G(n, k)$, which we have called $Spin^k(n)$, but also the groups $G(n, k, m) := (Spin(n)\times Spin(k)\times Spin(m))/\langle(1, -1, -1), (-1, -1, 1)\rangle$. One could consider orientable $n$--manifolds which admit a reduction of structure group with respect to the homomorphism $G(n, k, m) \to SO(n)$ given by projecting to the first factor. Admitting such a structure is equivalent to the existence of an orientable rank $k$ vector bundle $E$ such that $TM\oplus E$ is spin$^m$. In particular, Satz 3.1 can be interpreted as an obstruction to immersing a manifold in a spin$^c$ manifold with codimension $k$. When $m = 1$, we are in the case of immersing in a spin manifold; this corresponds to the fact that $G(n, k, 1) \cong G(n, k)$.

Note that if an orientable manifold $M$ admits a codimension $k$ immersion in a spin$^m$ manifold, then it follows that $M$ is spin$^{k+m}$, but the converse is not true. For example, $\mathbb{CP}^2$ is spin$^c$, i.e. spin$^{1+1}$, but if it were to admit a codimension 1 immersion in a spin manifold, then it would be spin, which it is not.\end{remark}

\section{$Pin^{k\pm}$ and other non-orientable analogues}

There are many non-orientable analogues of $Spin^k(n)$ which one can consider. For example, one can define the groups \begin{align*}Pin^{k+}(n) &:= (Pin^+(n)\times Spin(k))/\mathbb{Z}_2\\
Pin^{k-}(n) &:= (Pin^-(n)\times Spin(k))/\mathbb{Z}_2.
\end{align*}
By analogy with the spin$^k$ case, we say a (not necessarily orientable) manifold $M$ is pin$^{k\pm}$ if its tangent bundle admits a reduction of structure group with respect to $Pin^{k\pm}(n) \to O(n)$. This can be shown to be equivalent to the existence of an orientable rank $k$ vector bundle $E$ such that $TM\oplus E$ is pin$^{\pm}$, using an argument analagous to the one that appears in the proof of Proposition \ref{equivalences}. Recall, a vector bundle is pin$^+$ if $w_2 = 0$, and it is pin$^-$ if $w_2 + w_1^2 = 0$. This allows for another characterisation in terms of the existence of an orientable rank $k$ bundle $E$ with $w_2(E)$ equal to $w_2(M)$ or $w_2(M) + w_1(M)^2$ in the pin$^{k+}$ and pin$^{k-}$ cases respectively.

There are other possible groups one could consider:
\begin{align*}
Spin^{k+}(n) &:= (Spin(n)\times Pin^+(k))/\mathbb{Z}_2\\
Spin^{k-}(n) &:= (Spin(n)\times Pin^-(k))/\mathbb{Z}_2\\
Pin^{k++}(n) &:= (Pin^+(n)\times Pin^+(k))/\mathbb{Z}_2\\
Pin^{k+-}(n) &:= (Pin^+(n)\times Pin^-(k))/\mathbb{Z}_2\\
Pin^{k-+}(n) &:= (Pin^-(n)\times Pin^+(k))/\mathbb{Z}_2\\
Pin^{k--}(n) &:= (Pin^-(n)\times Pin^-(k))/\mathbb{Z}_2.
\end{align*}

Each of the corresponding manifolds can be characterised by the existence of a (not necessarily orientable) rank $k$ vector bundle with the appropriate characteristic class equal to the appropriate characteristic class of the manifold. For example, a manifold $M$ is spin$^{k+}$ if and only if it is orientable and admits a (not necessarily orientable) rank $k$ vector bundle $E$ with $w_2(E) = w_2(M)$.

Some of these structures admit a further characterisation in terms of embeddings. For instance, a manifold $M$ is pin$^{k\pm}$ if and only if it embeds in a pin$^{\pm}$ manifold with codimension $k$. On the other hand, if an $n$--dimensional manifold $M$ embeds in a spin manifold with codimension $k$, then $w_1(M) = w_1(E)$ and $w_2(M) + w_1(M)^2 = w_2(E)$ where $E$ denotes the normal bundle, so $M$ is pin$^{k-+}$; also note that $w_2(M) = w_2(E) + w_1(E)^2$, so $M$ is also pin$^{k+-}$. However, it is not necessarily the case that a pin$^{k+-}$ (or pin$^{k-+}$) manifold admits a codimension $k$ embedding into a spin manifold, since among the rank $k$ bundles $E$ satisfying the degree 2 characteristic class condition, there may not be any which also satisfy $w_1(E) = w_1(M)$.

\begin{remark} The groups $Pin^{3\pm}(n)$, i.e. $Pin^{h\pm}(n)$, have previously been considered in \cite[Proposition 9.16]{FH} and \cite[Appendix D]{SSGR} where they were denoted by $G^{\pm}_n$ and $G_{\pm}(n)$ respectively. \end{remark}

From now on we consider pin$^{k\pm}$. Similarly to Proposition \ref{product}, we see that if $M$ is a spin manifold, then $M\times N$ is pin${}^{k\pm}$ if and only if $N$ is pin${}^{k\pm}$. In addition, the connected sum of two pin${}^{k\pm}$ manifolds is again pin${}^{k\pm}$; cf. Proposition \ref{sum}. Note that a pin$^{4\pm}$ manifold is also pin$^{3\pm}$ in direct analogy with Lemma \ref{spin4=spinh}.

Suppose now that a manifold $M$ admits a codimension $k$ immersion in a spin manifold and denote the normal bundle by $E$. We consider three bundles obtained from $E$.

First consider $E\otimes\det E$. We have $w_1(E\otimes\det E) = (k + 1)w_1(E)$ and $$w_2(E\otimes\det E) = \begin{cases} w_2(E) + w_1(E)^2 & k \equiv 0, 3 \bmod 4\\ w_2(E) & k \equiv 1, 2 \bmod 4 \end{cases} = \begin{cases} w_2(M) & k \equiv 0, 3 \bmod 4\\ w_2(M) + w_1(M)^2 & k \equiv 1, 2 \bmod 4 \end{cases}.$$ %It then follows from the Cohen immersion theorem that if $n$ is such that $k = n - \alpha(n) \equiv 1 \bmod 4$, then every $n$--manifold has a pin$^{k-}$ structure, and if $n$ is such that $k \equiv 3 \bmod 4$, then every $n$--manifold has a pin$^{k-}$ structure. On the other hand, we have $w_1(E\oplus\det E) = 0$ and $w_2(E\oplus\det E) = w_2(E) + w_1(E)^2 = w_2(M)$, so every $n$--manifold is pin$^{(k+1)+}$.

Next consider $E\oplus\det E$, which has $w_1(E\oplus\det E) = 0$ and $w_2(E\oplus\det E) = w_2(M) + w_1(M)^2$.

Finally, we consider the bundle $(E\otimes\det E) \oplus\det E$. We have $w_1((E\otimes\det E)\oplus\det E) = kw_1(E)$ and $$w_2((E\otimes\det E)\oplus\det E) = \begin{cases} w_2(E) + w_1(E)^2 & k \equiv 2, 3 \bmod 4\\ w_2(E) & k \equiv 0, 1 \bmod 4 \end{cases} = \begin{cases} w_2(M) & k \equiv 2, 3 \bmod 4\\ w_2(M) + w_1(M)^2 & k \equiv 0, 1 \bmod 4. \end{cases}$$ %So if $n$ is such that $k \equiv 0 \bmod 4$, every $n$--manifold has a pin$^{(k+1)-}$ structure. 

Using these bundles, we see that in low dimensions, we have the following table:

\renewcommand{\arraystretch}{1.5}
\begin{table}[ht]
\begin{tabular}{l|cccccc}
Dimension            & 2         & 3         & 4        & 5          & 6          & 7          \\ \hline
Guaranteed structure & pin${}^-$ & pin${}^-$ & pin$^{h+}$ & \begin{tabular}{c} pin${}^{5+}$\\ pin${}^{5-}$\end{tabular} & pin${}^{5-}$ & pin${}^{6+}$
\end{tabular}
\vspace{0.5em}
\caption{The pin$^{k+}$ or pin$^{k-}$ structure guaranteed for any manifold in a given low dimension.}
\end{table}

Utilising the orientable examples of non-spin${}^k$ manifolds from the previous section, we can easily produce non-orientable manifolds admitting neither a pin$^{k+}$ nor a pin$^{k-}$ structure. Indeed, for a fixed $k$, take a non-spin${}^k$ manifold $M$, and consider the product $M\times K$ of $M$ with the Klein bottle $K$. Since $w_1(K)^2 = 0$, the product $M\times K$ admits a pin$^{k+}$ structure if and only if it admits a pin$^{k-}$ structure. Suppose it admits the former; it then follows that there is an $SO(k)$-bundle $P \to M\times K$ such that $w_2(P) = w_2(M\times K)$. Then, since $w_2(K) = 0$, we could pull back $P$ via an inclusion of $M$ into $M\times K$ as a factor, and conclude that $M$ is spin${}^k$, which is a contradiction.

\end{document}